\renewcommand{\vec}[1]{\mathbf{\boldsymbol{#1}}}
\newcommand{\xvec}[1]{\vec{\mathsf{#1}}}
\newcommand{\xit}[2]{\xvec{#1}^{(#2)}}
\newcommand{\dx}{\Delta x}
\newcommand{\dt}{\Delta t}
\newcommand{\kron}{\otimes}
\newcommand{\diag}[1]{\mathrm{diag}(#1)}
\DeclareMathOperator{\sech}{sech}
\newtheorem{theorem}{Theorem}
\newtheorem{proposition}{Proposition}
\title{Resolving Entropy Growth from Iterative Methods}
\author{Viktor Linders$^{\mbox{\tiny\rm 1}}$, 
Hendrik Ranocha$^{\mbox{\tiny\rm 2}}$,
Philipp Birken$^{\mbox{\tiny\rm 1}}$}
\date{}
\begin{document}
\maketitle
\baselineskip=0.9
\normalbaselineskip
\vspace{-3pt}
\begin{center}
{
\footnotesize\em $^{\mbox{\tiny\rm 1}}$Centre for mathematical sciences, Lund University, Lund, Sweden\\
$^{\mbox{\tiny\rm 2}}$Applied Mathematics, University of Hamburg, Hamburg, Germany\\ email: \, viktor.linders@math.lu.se \\ mail@ranocha.de \\ \qquad \, philipp.birken@na.lu.se
}
\end{center}

\begin{abstract}
\noindent We consider entropy conservative and dissipative discretizations of nonlinear conservation laws with implicit time discretizations and investigate the influence of iterative methods used to solve the arising nonlinear equations. We show that Newton's method can turn an entropy dissipative scheme into an anti-dissipative one, even when the iteration error is smaller than the time integration error. We explore several remedies, of which the most performant is a relaxation technique, originally designed to fix entropy errors in time integration methods. Thus, relaxation works well in consort with iterative solvers, provided that the iteration errors are on the order of the time integration method. To corroborate our findings, we consider Burgers' equation and nonlinear dispersive wave equations. We find that entropy conservation results in more accurate numerical solutions than non-conservative schemes, even when the tolerance is an order of magnitude larger.
\end{abstract}

{\it \noindent Keywords: iterative methods, entropy conservation, implicit methods, dispersive wave equations}


\section{Introduction} \label{sec:introduction}

For many partial differential equations (PDEs) in computational fluid dynamics (CFD), the notion of mathematical entropy plays an important role. It can be that entropy is preserved or that the solution satisfies an entropy inequality.
Alternatively, for nonlinear systems such as the Euler equations, an entropy inequality can be used to select a unique weak solution, at least in 1D.
Enforcing such an entropy inequality on the discrete level has, for certain equations, led to schemes that are provably
convergent to weak entropy solutions \cite{kroohl:99}.

In recent years it has been observed that the robustness of high-order discontinuous Galerkin (DG) discretizations for compressible turbulent flows, greatly benefits from discrete entropy stability; see \cite{gassner2021novel} and the references therein for an overview. Following the groundbreaking paper \cite{fiscar:13}, a space-time
version of the DG spectral element method (DG-SEM) with a specific choice of fluxes, was proven to be entropy-stable for systems of hyperbolic conservation laws \cite{fswdgc:19},
giving the first fully discrete high order scheme with this property. This method is inherently implicit.

Proofs of entropy stability for schemes involving implicit time integration assume that the arising systems of nonlinear equations are solved exactly. In practice, iterative solvers are used, which are terminated when a tolerance is reached. These in turn, have rarely been constructed with the intention of preserving physical invariants. Indeed, the impact of iterative solvers on the conservation of linear invariants was analyzed in \cite{birken2022conservation,linders2022locally}, and it was found that many iterative methods violate global or local conservation. A recent study of Jackaman and MacLachlan \cite{jackaman2022preconditioned} focuses on Krylov subspace methods for linear problems with quadratic invariants, and is related in spirit to this paper. Given the importance of entropy estimates in discrete schemes, there is a need to analyze the role played by iterative solvers in this context.

We consider PDEs posed in a single spatial dimension with periodic boundary conditions:
\begin{equation}
  u_t = {\cal L}(u), \quad x\in(x_\mathrm{min}, x_\mathrm{max}], \quad t\in[t_0,t_e].
\end{equation}
Here, ${\cal L}$ is a nonlinear differential operator. The cases considered are such that there is a convex nonlinear functional $\eta(u)$, here called the entropy, which is conserved. By splitting the spatial terms in certain ways, entropy conservative semi-discretizations are obtained with the aid of skew-symmetric difference operators.
Next, we apply well-known implicit time integration methods that are able to either conserve or dissipate entropy, resulting in nonlinear algebraic systems with entropy bounded solutions.

Using convergent iterative solvers within implicit time integration, the iteration error (and thus the entropy error) can be made arbitrarily small by iterating long enough. However, such an approach is ill-advised since the number of iterations dictates the efficiency of the scheme. On the other hand, it is desirable to iterate long enough to prevent the iteration error from affecting the time integration error. The iteration error should thus be kept on the order of the local time integration error, but smaller in magnitude. The interested reader will find an overview of the use of iterative solvers in computational fluid dynamics in \cite{birken:21}.

In this setting, we analyze the entropy behavior of Newton's method, which forms the basis of many iterative solvers. As it turns out, Newton's method can generate undesired growth (or decay) of the entropy error. We consider Burgers' equation in Section~\ref{sec:burgers} and demonstrate that Newton's method can turn an entropy dissipative scheme into an anti-dissipative method when the tolerance is comparable to the error in the time step. A detailed analysis reveals that the source of the entropy error is the Jacobian of the spatial discretization. In Section~\ref{sec:strategies}, strategies are evaluated that recover the entropy conservation after each Newton iteration. It is seen that these strategies come with a large cost to the efficiency of the solver, since they significantly reduce the convergence rate.

To get the best of both worlds --- entropy conservation and fast convergence --- we introduce relaxation methods in Section~\ref{sec:relaxation}. Under mild assumptions, relaxation can be used to recover any convex entropy. We apply relaxation to nonlinear dispersive wave equations in Sections~\ref{sec:kdv} and \ref{sec:bbm}. Experiments reveal that entropy conservation leads to numerical solutions of considerably higher quality than non-conservative schemes, even when larger tolerances on the iterations are used. We thus conclude that relaxation works well in combination with iterative methods. Finally, we summarize the developments and discuss our conclusions in Section~\ref{sec:conclusion}.

\subsection{Software}

The numerical experiments discussed in this article are implemented in
Julia \cite{bezanson2017julia}. We use the Julia packages
SummationByPartsOperators.jl \cite{ranocha2021sbp},
ForwardDiff.jl \cite{revels2016forward}, and
Krylov.jl \cite{montoison2020krylov}. All source code required to reproduce the
numerical experiments is available in our repository
\cite{linders2023resolvingRepro}.


\section{Burgers' equation} \label{sec:burgers}

As a starting point, we consider Burgers' equation as a classical model for nonlinear conservation laws. It is well known how to design both entropy conservative and entropy dissipative discretizations for this problem. The purpose of this section is to demonstrate that Newton's method can destroy the entropic behavior of the discretization and cause undesired entropy growth (or decay). This example serves as an illustration of a general truth: Iterative methods may destroy the design principles upon which a discretization has been built.

\subsection{The continuous case}

Consider the inviscid Burgers' equation defined on a 1D periodic domain,
\begin{equation} \label{eq:Burgers_equation}
u_t + 6 u u_x = 0, \quad u(x,t=0) = u_0.
\end{equation}
The factor 6 has been added for consistency with the Korteweg-de Vries equation discussed in
Section~\ref{sec:kdv}. Multiplying \eqref{eq:Burgers_equation} by the solution $u$ and integrating in space results in the identity
\[
\frac{\mathrm{d}}{\mathrm{d}t} \frac{1}{2} \| u \|^2 = 0,
\]
where periodicity has been used to eliminate the boundary terms. Here, $\| \cdot \|$ denotes the $L^2$ norm. Evidently, the quantity $\eta(u) = \frac{1}{2} \| u \|^2$ is conserved. Throughout, we refer to $\eta$ as an \emph{entropy} for Burgers' equation \eqref{eq:Burgers_equation}.

\subsection{The semi-discrete case}

It is well known how to design discretizations of \eqref{eq:Burgers_equation}
that mimic the entropy conservation of the continuous problem. Let $\xvec{D}$
be a skew-symmetric matrix that approximates the spatial derivative operator.
In this section, we use classical fourth-order accurate central finite
differences with periodic boundaries. Consider the semi-discretization
\begin{equation} \label{eq:Burgers_semidiscrete}
\xvec{u}_t + 2(\xvec{D} \diag{\xvec{u}} \xvec{u} + \diag{\xvec{u}} \xvec{D} \xvec{u}) = \xvec{0}.
\end{equation}
Here and elsewhere a sans serif font is used to denote quantities evaluated on a discrete, uniform computational grid with grid spacing $\dx$. The formulation \eqref{eq:Burgers_semidiscrete} constitutes a so-called skew-symmetric split form \cite[eq. (6.40)]{richtmyer1967difference}. It arises from the identity $6 u u_x = 2 \bigl((u^2)_x + u u_x\bigr)$.

Multiplying \eqref{eq:Burgers_semidiscrete} by $\dx \xvec{u}^\top$ and using the skew-symmetry of $\xvec{D}$ yields
\begin{align*}
\frac{\mathrm{d}}{\mathrm{d}t} \frac{1}{2} \| \xvec{u} \|^2 &= -2 \dx (\xvec{u}^\top \xvec{D} \diag{\xvec{u}} \xvec{u} + \xvec{u}^\top \diag{\xvec{u}} \xvec{D} \xvec{u}) \\
&= -2 \dx (\xvec{u}^\top \xvec{D} \diag{\xvec{u}} \xvec{u} + \xvec{u}^\top \xvec{D}^\top \diag{\xvec{u}} \xvec{u}) \\
&= -2 \dx (\xvec{u}^\top (\xvec{D} + \xvec{D}^\top) \diag{\xvec{u}} \xvec{u} = \xvec{0}.
\end{align*}
The final equality follows from the skew-symmetry of $\xvec{D}$. Here we have defined the discrete norm $\| \xvec{u} \|^2 \equiv \dx \xvec{u}^\top \xvec{u}$ with a slight notational abuse. Evidently the semi-discrete entropy $\eta(\xvec{u}) = \frac{1}{2} \| \xvec{u} \|^2$ is conserved.

\subsection{The fully discrete case}

Since the entropy $\eta$ is a quadratic functional of the solution, a fully discrete scheme that conserves entropy is obtained by discretizing \eqref{eq:Burgers_semidiscrete} in time using the implicit midpoint rule. Further, any $B$-stable Runge-Kutta method will be entropy dissipative
\cite[Section~357]{butcher2016numerical}. Here, we consider both the (conservative) implicit midpoint rule and the (dissipative) fourth-order, three-stage Lobatto~IIIC method. The entropy analyses in this section and the next are for simplicity reserved for the midpoint rule. The analogous results for Lobatto~IIIC are found in Appendix \ref{app:Lobatto}. They heavily utilize the fact that Lobatto~IIIC is equivalent to a Summation-By-Parts (SBP) method in time \cite{ranocha2019some}; see
\cite{versbach2022theoretical,nordstrom2013summation,lundquist2014sbp,boom2015high,linders2020properties}
for further developments of this topic.

For a generic system of ordinary differential equations $\xvec{u}_t = \xvec{f}(\xvec{u})$, the midpoint rule can be expressed as a Runge-Kutta method as follows:
\begin{align*}
\xvec{U} &= \xvec{u}^n + \frac{\dt_n}{2} \xvec{f}(\xvec{U}) \\
\xvec{u}^{n+1} &= \xvec{u}^n + \dt_n \xvec{f}(\xvec{U}) \equiv 2 \xvec{U} - \xvec{u}^n.
\end{align*}
Here, $\xvec{U}$ denotes an intermediate stage used to define the numerical solution $\xvec{u}^{n+1} \approx \xvec{u}(t_{n+1})$ in terms of the solution at the previous time step; $\xvec{u}^n \approx \xvec{u}(t_n)$. It is understood that $\dt_n = t_{n+1} - t_n$.

Applied to the semi-discretization \eqref{eq:Burgers_semidiscrete}, the fully discrete scheme becomes, after slight rearrangement,
\begin{equation} \label{eq:Burgers_fully_discrete}
\begin{aligned}
\xvec{F}(\xvec{U}) &:= \xvec{U} - \xvec{u}^n + \dt_n (\xvec{D} \diag{\xvec{U}} \xvec{U} + \diag{\xvec{U}} \xvec{D} \xvec{U}) = \xvec{0}, \\
\xvec{u}^{n+1} &\phantom{:}= 2 \xvec{U} - \xvec{u}^n.
\end{aligned}
\end{equation}
From the second line in \eqref{eq:Burgers_fully_discrete}, the entropy $\eta(\xvec{u}^{n+1})$ is given by
\begin{equation} \label{eq:entropy_midpoint}
\eta(\xvec{u}^{n+1}) = \frac{\dx}{2} (2 \xvec{U} - \xvec{u}^n)^\top (2 \xvec{U} - \xvec{u}^n) = \eta(\xvec{u}^n) + 2 \dx (\xvec{U}^\top \xvec{U} - \xvec{U}^\top \xvec{u}^n).
\end{equation}
Left multiplication of the first line in \eqref{eq:Burgers_fully_discrete} by $\xvec{U}^\top$ reveals that
\begin{equation} \label{eq:entropy_midpoint_derivation}
\begin{aligned}
\xvec{U}^\top \xvec{U} - \xvec{U}^\top \xvec{u}^n &= -\dt_n \xvec{U}^\top \xvec{f}(\xvec{U}) \\
&= -\dt_n \xvec{U}^\top (\xvec{D} \diag{\xvec{U}} + \diag{\xvec{U}} \xvec{D}) \xvec{U} \\
&= - \dt_n \xvec{U}^\top ((\xvec{D} + \xvec{D}^\top) \diag{\xvec{U}}) \xvec{U} = 0,
\end{aligned}
\end{equation}
where the skew-symmetry of $\xvec{D}$ has been used in the same way as in the semi-discrete analysis. Consequently, $\eta(\xvec{u}^{n+1}) = \eta(\xvec{u}^n)$, hence entropy is conserved.

\subsection{Newton's method} \label{subsec:Newtons_method}

The analysis above assumes that the stage equations in the first line of \eqref{eq:Burgers_fully_discrete} are solved exactly. In practice this is infeasible. Instead, the stage vector $\xvec{U}$ is approximated using iterative methods. Here we consider Newton's method as an illustrative example. If Newton iterates are computed until the residual is sufficiently small, then entropy conservation is effectively retained. However, for efficiency reason it is desirable to terminate the iterates when the residual is smaller than some tolerance, chosen to reflect the overall expected accuracy of the scheme.
We will show that Newton's method is not entropy conservative from one iteration to the next. Consequently, schemes utilizing Newton's method (and other iterative methods) tend to lose the design principle upon which the discrete scheme is built, namely entropy conservation.

The iterates produced by Newton's method applied to the stage equation in \eqref{eq:Burgers_fully_discrete} are obtained as
\begin{equation} \label{eq:Newtons_method}
\begin{aligned}
\xvec{F}'(\xit{U}{k}) \Delta \xvec{U} + \xvec{F}(\xit{U}{k}) &= \xvec{0}, \\
\xit{U}{k+1} &= \xit{U}{k} + \Delta \xvec{U}, \quad k=0,1,\dots
\end{aligned}
\end{equation}
Here, $\xvec{F}'$ denotes the Jacobian of $\xvec{F}$ and can be explicitly evaluated; see \cite{chan2022efficient}:
\begin{equation} \label{eq:Burgers_Jacobian}
\xvec{F}'({\xit{U}{k}}) = \xvec{I} + \dt_n \left( \diag{\xit{U}{k}} \xvec{D} + \diag{\xvec{D} \xit{U}{k}} + 2 \xvec{D} \diag{\xit{U}{k}} \right).
\end{equation}
Here, $\xvec{I}$ is the identity matrix. Inserting this expression into \eqref{eq:Newtons_method}, explicitly writing out $\xvec{F}(\xit{U}{k})$ from \eqref{eq:Burgers_fully_discrete}, and utilizing the fact that $\Delta \xvec{U} = \xit{U}{k+1} - \xit{U}{k}$ leads to the following equation for $\xit{U}{k+1}$:
\begin{equation} \label{eq:Newton_Burgers}
\begin{aligned}
\xit{U}{k+1} - \xvec{u}^n &+ \dt_n \left( \diag{\xit{U}{k}} \xvec{D} \xit{U}{k+1} + \xvec{D} \diag{\xit{U}{k}} \xit{U}{k+1} \right) \\
&+ \dt_n \underbrace{\left[ \diag{\xvec{D} \xit{U}{k}} + \xvec{D} \diag{\xit{U}{k}} \right]}_{\xvec{M}} \Delta \xvec{U} = \xvec{0}.
\end{aligned}
\end{equation}
This equation replaces the stage equations in \eqref{eq:Burgers_fully_discrete} when $k+1$ Newton iterations are performed.

Suppose that Newton's method is terminated after $k+1$ iterations and that the solution is updated as $\xvec{u}^{n+1} = 2 \xit{U}{k+1} - \xvec{u}^n$.
As in \eqref{eq:entropy_midpoint}, the entropy is given by
\[
\eta(\xvec{u}^{n+1}) = \eta(\xvec{u}^n) + 2 \dx \left( (\xit{U}{k+1})^\top \xit{U}{k+1} - (\xit{U}{k+1})^\top \xvec{u}^n \right).
\]
Upon computing the parenthesized term, note that the first line of \eqref{eq:Newton_Burgers} is precisely $\xit{U}{k+1} - \xvec{u}^n + \dt_n \xvec{f}(\xit{U}{k})$. By the same derivation as in \eqref{eq:entropy_midpoint_derivation} it follows that $(\xit{U}{k})^\top \xvec{f}(\xit{U}{k}) = 0$. Consequently,
\[
(\xit{U}{k+1})^\top \xit{U}{k+1} - (\xit{U}{k+1})^\top \xvec{u}^n =- \dt_n (\xit{U}{k+1})^\top \xvec{M} \Delta \xvec{U},
\]
and hence
\[
\eta(\xvec{u}^{n+1}) = \eta(\xvec{u}^n) - 2 \dx \dt_n (\xit{U}{k+1})^\top \xvec{M} \Delta \xvec{U}.
\]
A slightly more elegant expression is obtained by replacing $(\xit{U}{k})^\top$ with $\Delta \xvec{U}^\top$. This can be done since the vector $\xit{U}{k}$ lies in the kernel of $\xvec{M}^\top$. To see this, note that
\begin{align*}
\xvec{M}^\top \xit{U}{k} &= \diag{\xvec{D} \xit{U}{k}} \xit{U}{k} + \diag{\xit{U}{k}} \xvec{D}^\top \xit{U}{k} \\
&= \diag{\xvec{D} \xit{U}{k}} \diag{\xit{U}{k}} \xvec{1} - \diag{\xit{U}{k}} \diag{\xvec{D} \xit{U}{k}} \xvec{1} = \xvec{0}.
\end{align*}
Here, the skew-symmetry of $\xvec{D}$ has been used together with the fact that any vector $\xvec{v}$ satisfies $\xvec{v} = \diag{\xvec{v}} \xvec{1}$, where $\xvec{1}$ is the vector of ones. The final equality follows by the commutativity of diagonal matrices.

We summarize these observations in the following:

\begin{proposition} \label{prop:entropy_midpoint_Newton}
Consider the discretization \eqref{eq:Burgers_fully_discrete} of Burgers' equation \eqref{eq:Burgers_equation} and apply $(k+1)$ iterations with Newton's method to the stage equations. The entropy of the resulting numerical solution satisfies
\begin{equation} \label{eq:Newton_Burgers_entropy}
\eta(\xvec{u}^{n+1}) = \eta(\xvec{u}^n) - 2 \dx \dt_n \Delta \xvec{U}^\top \xvec{M} \Delta \xvec{U}.
\end{equation}
Thus, the entropy error $\eta(\xvec{u}^{n+1}) - \eta(\xvec{u}^n)$ depends on the \emph{indefinite} quadratic form
\[
\Delta \xvec{U}^\top \xvec{M} \Delta \xvec{U} = \Delta \xvec{U}^\top \left[ \diag{\xvec{D} \xit{U}{k}} + \xvec{D} \diag{\xit{U}{k}} \right] \Delta \xvec{U}.
\]
Consequently, Newton's method may cause both entropy growth and decay.
\end{proposition}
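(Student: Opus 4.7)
My plan is to repackage the derivation already sketched in the text preceding the proposition into a clean argument, and then substantiate the indefiniteness claim, which is where I anticipate the main technical care will be required.

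First, expanding $\eta(\xvec{u}^{n+1}) = \tfrac{1}{2}\|2\xit{U}{k+1} - \xvec{u}^n\|^2$ exactly as in \eqref{eq:entropy_midpoint} reduces the task to evaluating $(\xit{U}{k+1})^\top(\xit{U}{k+1} - \xvec{u}^n)$. I would then left-multiply the Newton equation \eqref{eq:Newton_Burgers} by $(\xit{U}{k+1})^\top$, which splits this inner product into two pieces: a ``consistent'' contribution $-\dt_n\,(\xit{U}{k+1})^\top(\diag{\xit{U}{k}}\xvec{D} + \xvec{D}\diag{\xit{U}{k}})\xit{U}{k+1}$, and the $\xvec{M}$-term. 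The consistent piece vanishes by the same mechanism used in \eqref{eq:entropy_midpoint_derivation}: setting $\xvec{w} := \diag{\xit{U}{k}}\xit{U}{k+1}$, symmetry of the diagonal factor together with skew-symmetry of $\xvec{D}$ yield the two summands as $\pm\xvec{w}^\top\xvec{D}\xit{U}{k+1}$, which cancel.

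Next, I would promote $(\xit{U}{k+1})^\top\xvec{M}\Delta\xvec{U}$ to $\Delta\xvec{U}^\top\xvec{M}\Delta\xvec{U}$ by writing $\xit{U}{k+1} = \xit{U}{k} + \Delta\xvec{U}$ and invoking the kernel identity $\xvec{M}^\top\xit{U}{k} = \xvec{0}$, which is already verified in the excerpt. Assembling the three steps gives \eqref{eq:Newton_Burgers_entropy}.

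Finally, to certify indefiniteness of the quadratic form, I would inspect the symmetric part $\tfrac{1}{2}(\xvec{M}+\xvec{M}^\top) = \diag{\xvec{D}\xit{U}{k}} + \tfrac{1}{2}(\xvec{D}\diag{\xit{U}{k}} - \diag{\xit{U}{k}}\xvec{D})$, whose second summand has zero diagonal (commutators of $\xvec{D}$ with diagonal matrices do). Probing with standard basis vectors $\xvec{e}_i$ therefore gives $\xvec{e}_i^\top\xvec{M}\xvec{e}_i = (\xvec{D}\xit{U}{k})_i$, which takes both signs on the grid whenever $\xit{U}{k}$ is non-monotone, as is generically the case for any solution of interest. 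This last step is where I expect most care to be required, since a fully general indefiniteness statement needs a mild qualifier on $\xit{U}{k}$; however, the proposition's conclusion is only that Newton's method \emph{may} cause entropy growth or decay, so exhibiting one configuration of each sign suffices.
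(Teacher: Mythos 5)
Your derivation of the identity \eqref{eq:Newton_Burgers_entropy} follows the paper's own route exactly: expand $\eta(2\xit{U}{k+1}-\xvec{u}^n)$ as in \eqref{eq:entropy_midpoint}, contract \eqref{eq:Newton_Burgers} with $(\xit{U}{k+1})^\top$ so that the consistent part dies by skew-symmetry of $\xvec{D}$, and then swap $(\xit{U}{k+1})^\top$ for $\Delta\xvec{U}^\top$ via $\xvec{M}^\top\xit{U}{k}=\xvec{0}$; all of these steps are correct. Where you go beyond the paper is the indefiniteness claim: the paper merely asserts it and points to the numerical experiment (entropy first grows, then returns), whereas you probe the quadratic form with basis vectors and find $\xvec{e}_i^\top\xvec{M}\xvec{e}_i=(\xvec{D}\xit{U}{k})_i$, using that $\xvec{D}$ (being skew-symmetric) has zero diagonal. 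This is a genuine, correct strengthening, and your qualifier can even be sharpened: since $\xvec{D}\xvec{1}=\xvec{0}$ and $\xvec{D}$ is skew-symmetric, $\sum_i(\xvec{D}\xit{U}{k})_i=\xvec{1}^\top\xvec{D}\xit{U}{k}=0$, so the diagonal entries of $\xvec{M}$ sum to zero and hence take both signs unless $\xvec{D}\xit{U}{k}=\xvec{0}$; no monotonicity discussion is needed, only that $\xit{U}{k}$ is not in the kernel of $\xvec{D}$. Either way, your argument fully supports the proposition's (deliberately weak) conclusion that entropy may grow or decay.
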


With Lobatto~IIIC in place of the midpoint rule, the entropy error has an identical form, although $\Delta \xvec{U}$ and $\xvec{M}$ incorporate all intermediate stages in this case. Assuming that the iterates converge, $\Delta \xvec{U}$ will decrease until the entropy error is vanishingly small. However, in practical applications, we terminate the iterations at some tolerance matching the accuracy of the scheme. In such circumstances, the entropy error must be corrected by other means.

As an illustrative example we compute a single time step for Burgers' equation \eqref{eq:Burgers_equation}. The spatial domain is set to $(-10,10]$ and the initial condition is given by
\begin{equation} \label{eq:initial_data}
u_0 = \frac{c}{2} \sech^2 \left( \frac{\sqrt{c}}{2} x \right),
\end{equation}
where $c=2$. In the spatial discretization we set $\dx = 0.1$. Both Lobatto~IIIC and the midpoint rule are used in time with $\dt = 0.5$. The entropy $\eta(\xvec{u}^n)$ (with $n=1$) and residual $\| \xvec{F}(\xit{U}{k}) \|$ are shown in Fig. \ref{fig:Newton_Burgers_entropy_residual} when $k$ Newton iterations have been used to approximate the solution to the stage equation in \eqref{eq:Burgers_fully_discrete}. Throughout, $\xit{U}{0} = \xvec{u}^n$ is used as initial guess. This choice is known to conserve linear invariants \cite{birken2022conservation,linders2022locally}.

The entropy visibly grows when too few Newton iterations are used. As expected, it drops back to its correct value with further iterations, testifying to the indefiniteness of the quadratic form in \eqref{eq:Newton_Burgers_entropy}. The residual converges quadratically as expected for Newton's method.

These examples show that if Newton's method is applied with a tolerance around $10^{-3}$, then entropy growth is to be expected in these simulations. Note that entropy growth may occur even for the provably entropy dissipative discretization.

\begin{figure}
\centering
\begin{subfigure}[ht]{0.9\textwidth}
\centering
  \includegraphics[width=\textwidth]{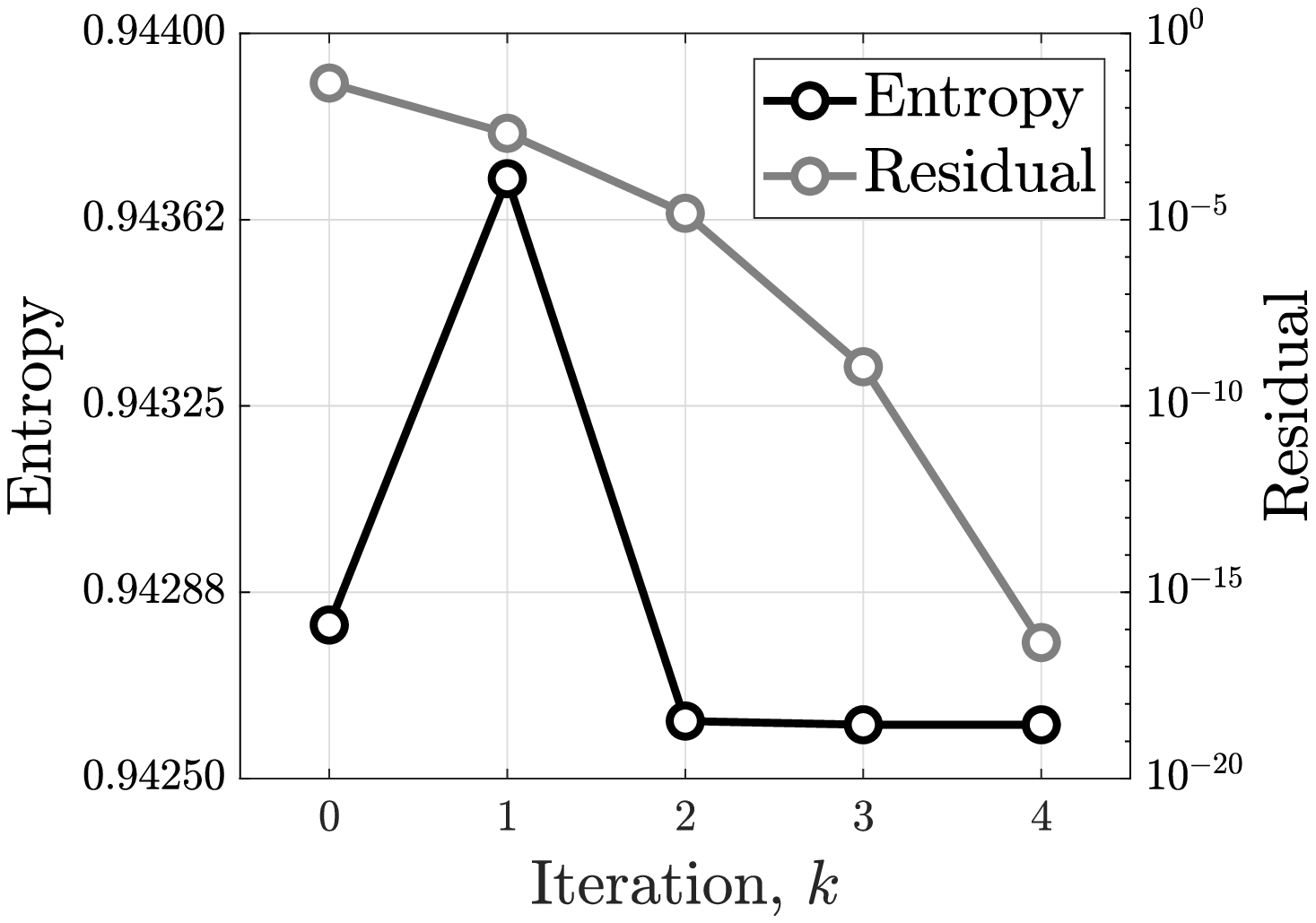}
  \caption{Entropy dissipative 3-stage Lobatto~IIIC.}
\end{subfigure}
\hfill
\begin{subfigure}[ht]{0.9\textwidth}
\centering
  \includegraphics[width=\textwidth]{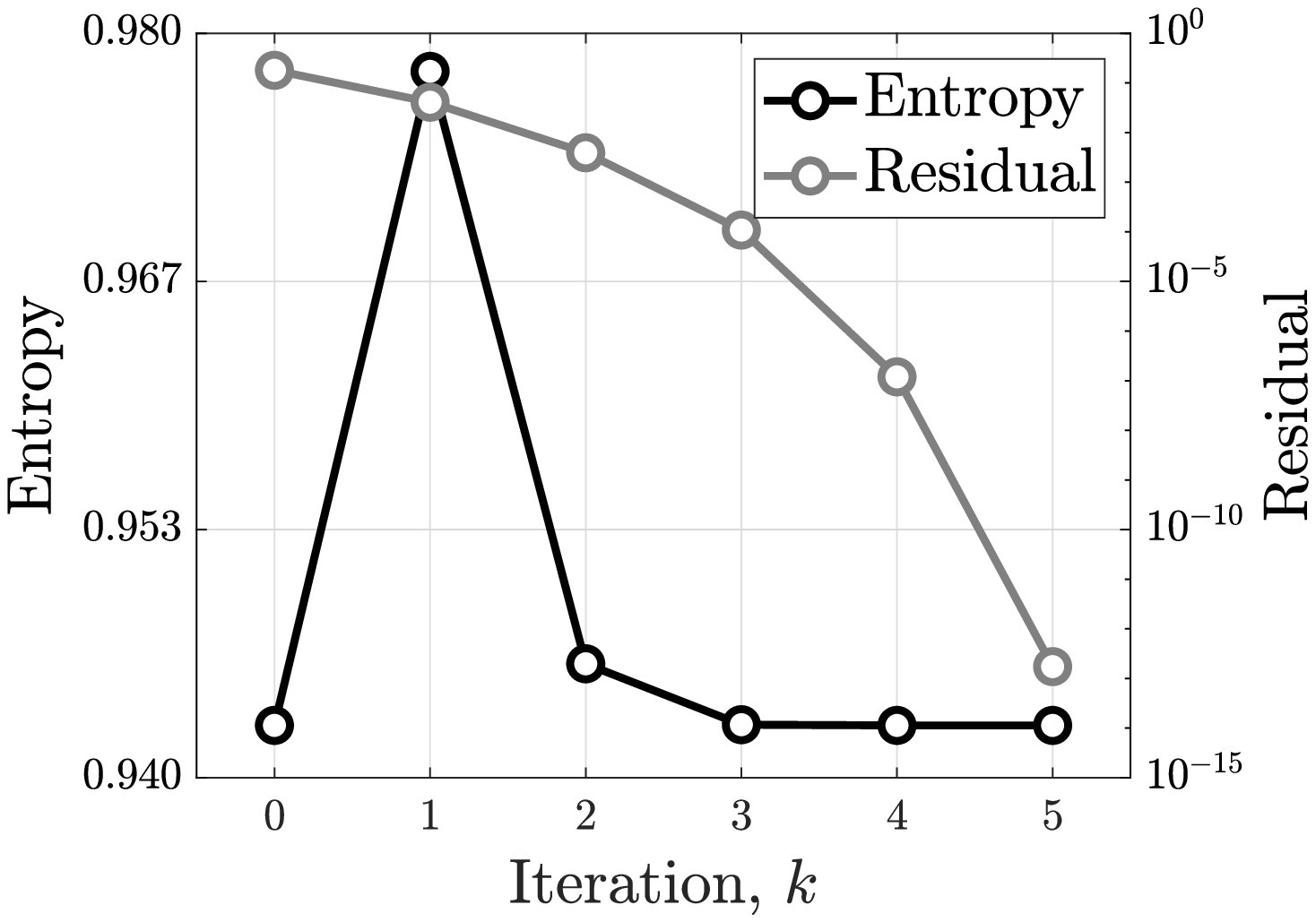}
  \caption{Entropy conservative implicit midpoint rule.}
\end{subfigure}
\caption{Entropy $\eta(\xvec{u}^n) = \| \xvec{u}^n\| / 2$ and residual
           $\| \xvec{F}(\xit{U}{k}) \|$ after $n=1$ time steps with
           $k$ Newton iterations applied to Burgers' equation with $\dx = 			   0.1$ and $\dt = 0.1$.}
\label{fig:Newton_Burgers_entropy_residual}
\end{figure}


\section{Strategies for recovering entropy conservation} \label{sec:strategies}

There are several ways in which we can recover entropy conservation within
Newton's method. The most obvious is to simply keep iterating until the entropy error is negligible. However, this process might be costly since we may be forced to iterate to residuals that are smaller than what is motivated by the accuracy of the discretization.

The entropy error in \eqref{eq:Newton_Burgers_entropy} opens up several avenues for modifying Newton's method to recover entropy conservation in each iteration. In the following subsections we describe two strategies that are independent of the choice of tolerance. However, as we will see, they both converge slower than Newton's method. The two methods are, respectively
\begin{itemize}
\item Method of Newton-type: By modifying the Jacobian, the entropy error is removed in its entirety.
\item Inexact Newton: Entropy conservation is recovered using a line search.
\end{itemize}
Throughout this section, we confine our attention to the entropy conservative discretization that utilizes the implicit midpoint rule.


\subsection{Method of Newton-type}

The entropy error \eqref{eq:Newton_Burgers_entropy} caused by Newton's method originates from the Jacobian of the spatial discretization. A simple remedy is thus to modify the Jacobian by removing the problematic terms, thereby obtaining a method of Newton-type. In other words, replacing the exact Jacobian $\xvec{F}'$ in \eqref{eq:Burgers_Jacobian} with the approximation
\[
\tilde{\xvec{F}}'({\xit{U}{k}}) := \xvec{I} + \dt_n \left( \diag{\xit{U}{k}} \xvec{D} + \xvec{D} \diag{\xit{U}{k}} \right)
\]
should result in a vanishing entropy error. However, this will simultaneously reduce the convergence speed from quadratic to linear \cite[Chapter 5]{kelley1995iterative}.

Repeating the experiment in section \ref{subsec:Newtons_method} reveals that this is indeed the case. Fig. \ref{fig:Newton_Burgers_entropy_residual_mod_Jacobian} shows the entropy and residual. Clearly the entropy is conserved as desired. However, the convergence is no longer quadratic but linear. After 14 iterations, the residual is comparable to four iterations with Newton's method. It is thus questionable if anything has been gained.

\begin{figure}[h]
\centering
%
%
  \includegraphics[width=0.8\textwidth]{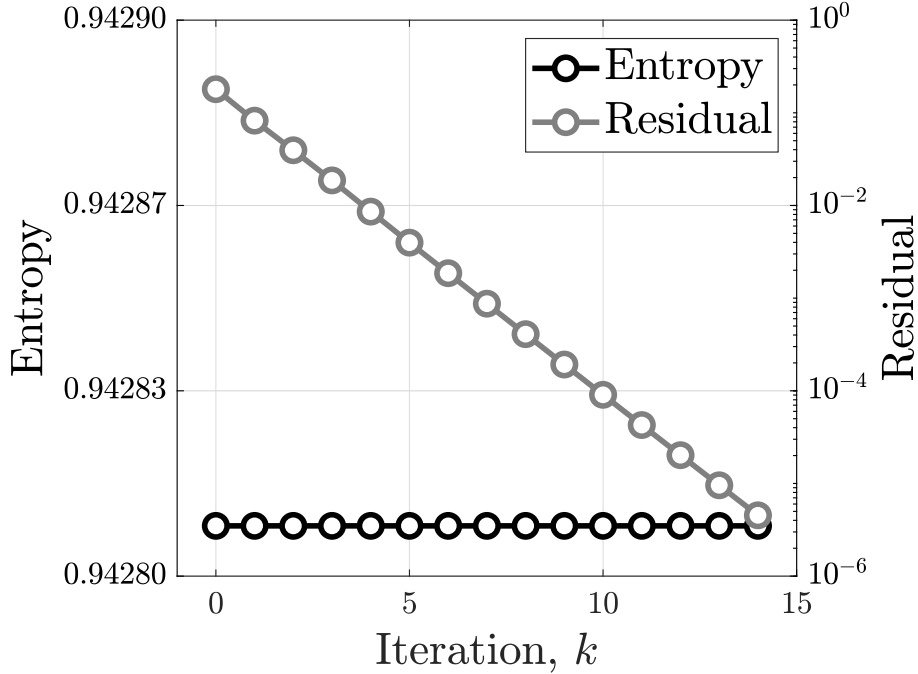}
  \caption{Entropy $\eta(\xvec{u}^n)$ and residual $\| \xvec{F}(\xit{U}{k}) \|$
           after $n=1$ time steps with $k \in \{ 0,\dots,14 \}$ iterations of method of Newton-type for the implicit midpoint rule applied to Burgers'
           equation with $\dx = 0.1$ and $\dt = 0.5$. The convergence is linear.}
  \label{fig:Newton_Burgers_entropy_residual_mod_Jacobian}
\end{figure}


\subsection{Inexact Newton}

As a second alternative, entropy conservation can be recovered through a line search. We replace Newton's method as stated in \eqref{eq:Newtons_method} by
\begin{equation} \label{eq:inexact_Newton}
\begin{aligned}
\xvec{F}'(\xit{U}{k}) ( \tilde{\xvec{U}} - \xit{U}{k}) + \xvec{F}(\xit{U}{k}) &= \xvec{0}, \\
\xit{U}{k+1} &= \alpha_k \tilde{\xvec{U}} + (1-\alpha_k) \xit{U}{k},
\end{aligned}
\end{equation}
where $\alpha_k \in [0,1]$ is a sequence of scalar parameters. This formulation is equivalent to the inexact Newton's method
\begin{equation*}
\begin{aligned}
\xvec{F}'(\xit{U}{k}) \Delta \xvec{U} + \xvec{F}(\xit{U}{k}) &= (1-\alpha_k) \xvec{F}(\xit{U}{k}), \\
\xit{U}{k+1} &= \xit{U}{k} + \Delta \xvec{U}.
\end{aligned}
\end{equation*}
If the sequence $\{ \alpha_k \}$ is such that $(1-\alpha_k) = \mathcal{O}(\| \xvec{F}(\xit{U}{k}) \|)$, then quadratic convergence is retained under standard assumptions. Under the much less severe condition $0 \leq 1-\alpha_k < 1$, covergence is generally linear \cite{eisenstat1996choosing}.

The essential property used in the proof that the fully discrete scheme \eqref{eq:Burgers_fully_discrete} is entropy conservative is that the stage vector $\xvec{U}$ satisfies $\xvec{U}^\top \xvec{U} - \xvec{U}^\top \xvec{u}^n = 0$. Proposition \ref{prop:entropy_midpoint_Newton} shows that the same relation does not hold for the Newton iterations. To ensure entropy conservation for the inexact Newton method, $\alpha_k$ must therefore be chosen such that
\[
(\xit{U}{k+1})^\top \xit{U}{k+1} - (\xit{U}{k+1})^\top \xvec{u}^n = 0,
\]
where $\xit{U}{k+1}$ is given as in \eqref{eq:inexact_Newton}. This is a quadratic equation in $\alpha_k$. Assuming that $\xit{U}{k}$ has correct entropy, the two roots are $\alpha_k = 0$ and
\[
\alpha_k = -\frac{\langle \Delta \xvec{U}, \xit{U}{k} \rangle + \langle \tilde{\xvec{U}}, \xit{U}{k} - \xvec{u}^n \rangle}{\| \Delta \xvec{U} \|^2}.
\]
Here, $\langle \cdot, \cdot \rangle$ denotes the Euclidean inner product scaled by $\dx$. The trivial root $\alpha_k = 0$ results in $\xit{U}{k+1} = \xit{U}{k}$ and is of no use, hence only the second root needs to be considered.

Repeating the previous experiment with inexact Newton leads to the results displayed in Fig. \ref{fig:Newton_Burgers_entropy_residual_line_search}. Entropy is conserved as expected. The convergence is faster than it was with the method of Newton-type, however it is still linear. This happens since $\alpha_k \in [0,1]$ for each $k$ but does not approach unity, as is necessary for quadratic convergence. Instead, its value appears to plateau around $0.95$ in this particular experiment.

\begin{figure}[ht]
\centering
  \includegraphics[width=0.8\textwidth]{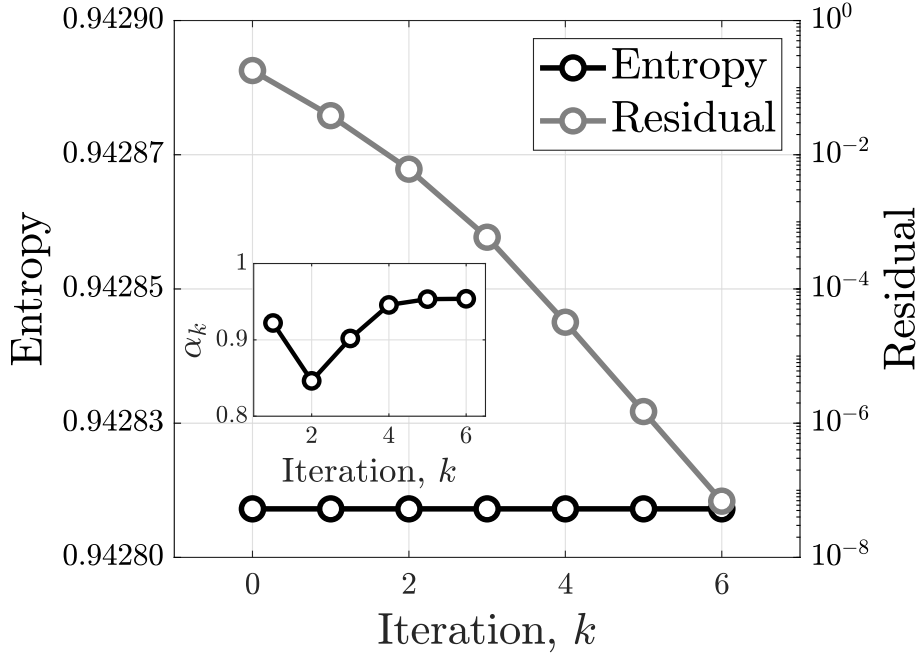}
  \caption{Entropy $\eta(\xvec{u}^n)$ and residual $\| \xvec{F}(\xit{U}{k}) \|$
           after $n=1$ time steps with $k \in \{ 0,\dots,6 \}$ inexact Newton
           iterations for the implicit midpoint rule applied to Burgers'
           equation with $\dx = 0.1$ and $\dt = 0.5$. The convergence is linear.}
  \label{fig:Newton_Burgers_entropy_residual_line_search}
\end{figure}


\section{Theory of relaxation methods} \label{sec:relaxation}

The idea of a line search discussed in the previous section can be applied, not necessarily after each Newton iteration, but alternatively just once after a full time step computed using multiple Newton iterations. The resulting schemes are known as relaxation methods
\cite{ketcheson2019relaxation,ranocha2020relaxation,ranocha2020general}. As we will see, relaxation solves the problem with entropy growth without impacting the convergence of Newton's method, or that of any other iterative method.

While a general theory of relaxation methods that includes multistep methods is available \cite{ranocha2020general}, we restrict the following discussion to one-step methods for simplicity. Thus, we consider a system of ODEs $\xvec{u}'(t) = f\bigl(\xvec{u}(t)\bigr)$ and a one-step method.
We again assume that there is a (nonlinear and sufficiently smooth) functional $\eta$ of interest, which we call an entropy.

In this setting, the basic idea of relaxation methods is to perform a line search along the secant line connecting the new approxiation $\xvec{u}^{n+1}$ produced by a given time integration method, and the previous value $\xvec{u}^{n}$. This results in the relaxed value
\begin{equation*}
  \xvec{u}^{n+1}_\gamma = \xvec{u}^{n} + \gamma (\xvec{u}^{n+1} - \xvec{u}^{n}),
\end{equation*}
where $\gamma \approx 1$ is the relaxation parameter chosen to enforce the
desired conservation or dissipation of $\eta$. This idea goes back to Sanz-Serna \cite{sanzserna1982explicit,sanzserna1983method} and Dekker \& Verwer \cite[pp. 265--266]{dekker1984stability}, who considered entropies $\eta$ given by inner product norms. However, the first approaches resulted in an order reduction \cite{calvo2006preservation}, which has been fixed in \cite{ketcheson2019relaxation} for inner product norms and extended to general entropies in \cite{ranocha2020relaxation,ranocha2020general}. Some further developments can be found in \cite{ranocha2020relaxationHamiltonian,ranocha2020fully,bencomo2023discrete,kang2022entropy,li2022implicit,li2023linearly}.

To motivate the approach, we augment the ODE by the equations $t' = 1$ and the entropy evolution given by the chain rule, leading to
\begin{equation*}
  \frac{\mathrm{d}}{\mathrm{d}t}
  \begin{pmatrix}
    t \\
    \xvec{u}(t) \\
    \eta\bigl(\xvec{u}(t)\bigr)
  \end{pmatrix}
  =
  \begin{pmatrix}
    1 \\
    f\bigl(\xvec{u}(t)\bigr) \\
    \eta'\bigl(\xvec{u}(t)\bigr) f\bigl(\xvec{u}(t)\bigr)
  \end{pmatrix}.
\end{equation*}
Given a suitable estimate of the entropy $\eta^\mathrm{new} = \eta\bigl(\xvec{u}(t^{n+1})\bigr) + \mathcal{O}(\dt^{p+1})$, where $p$ is the order of the time
integration method, relaxation methods enforce
\begin{equation} \label{eq:relaxation}
  \begin{pmatrix}
    t^{n+1}_\gamma \\
    \xvec{u}^{n+1}_\gamma \\
    \eta\bigl(\xvec{u}^{n+1}_\gamma\bigr)
  \end{pmatrix}
  =
  \begin{pmatrix}
    t^{n} \\
    \xvec{u}^{n} \\
    \eta\bigl(\xvec{u}^{n}\bigr)
  \end{pmatrix}
  + \gamma \dt
  \begin{pmatrix}
    1 \\
    \xvec{u}^{n+1} - \xvec{u}^{n} \\
    \eta^\mathrm{new} - \eta\bigl(\xvec{u}^{n}\bigr)
  \end{pmatrix}
\end{equation}
by inserting the second equation into the third one, resulting in a scalar
equation for the relaxation parameter
$\gamma$. Next, the numerical solution is updated according to the second
equation and the current simulation time is set to $t^{n+1}_\gamma$.
The last step is required to avoid order reduction. Finally,
the time integration is continued using $(t^{n+1}_\gamma, \xvec{u}^{n+1}_\gamma)$
instead of $(t^{n+1}, \xvec{u}^{n+1})$.

If entropy conservation is to be enforced, the canonical choice of the entropy estimate is $\eta^\mathrm{new} = \eta\bigl(\xvec{u}^{n}\bigr)$. In case of entropy dissipation, $\eta^\mathrm{new}$ can be estimated \cite{ketcheson2019relaxation,ranocha2020relaxation,ranocha2020general}: For an RK method
\begin{equation*}
\begin{aligned}
  \xvec{y}^i
  &=
  \xvec{u}^{n} + \dt \sum_{j=1}^{s} a_{ij} \, f\bigl(\xvec{y}^j\bigr),
  \qquad i \in \{1, \dots, s\},
  \\
  \xvec{u}^{n+1}
  &=
  \xvec{u}^{n} + \dt \sum_{i=1}^{s} b_{i} \, f\bigl(\xvec{y}^i\bigr),
\end{aligned}
\end{equation*}
with non-negative weights $b_i  \ge 0$, choose
\begin{equation*}
\label{eq:eta-estimate-RK}
  \eta^\mathrm{new}
  =
  \eta(\xvec{u}^{n}) + \dt \sum_{i=1}^s b_i (\eta' f)(\xvec{y}^{i}).
\end{equation*}
This leads to an entropy dissipative scheme.

The general theory of relaxation methods yields the following result
\cite{ranocha2020relaxation,ranocha2020general}:

\begin{theorem} \label{thm:relaxation}
  Consider the system of ODEs $\xvec{u}'(t) = f\bigl(\xvec{u}(t)\bigr)$.
  Assume $\xvec{u}^{n} = \xvec{u}(t^{n})$ and
  $\xvec{u}^{n+1} = \xvec{u}(t^{n+1}) + \mathcal{O}(\dt^{p+1})$
  with $p \ge 2$ and $t^{n+1} = t^{n} + \dt$. If
  $\eta^\mathrm{new} = \eta\bigl(\xvec{u}(t^{n+1})\bigr) + \mathcal{O}(\dt^{p+1})$,
  $\dt > 0$ is small enough, and the non-degeneracy condition
  \begin{equation*}
    \eta'(\xvec{u}^{n+1}) \frac{\xvec{u}^{n+1} - \xvec{u}^{n}}{\| \xvec{u}^{n+1} - \xvec{u}^{n} \|}
    =
    c \dt + \mathcal{O}( \dt^{2} ),
  \end{equation*}
  is satisfied with $c \neq 0$, then there is a unique $\gamma = 1 + \mathcal{O}( \dt^{p-1} )$
  that satisfies the relaxation condition \eqref{eq:relaxation} and the
  resulting relaxation method is of order $p$ such that
  \[
  \xvec{u}^{n+1}_\gamma = \xvec{u}(t^{n+1}_\gamma) + \mathcal{O}(\dt^{p+1}).
  \]
\end{theorem}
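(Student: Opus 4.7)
The plan is to recast the relaxation condition \eqref{eq:relaxation} as a single scalar equation in $\gamma$ and apply a quantitative implicit function theorem near $\gamma = 1$. Substituting the second row of \eqref{eq:relaxation} into the third yields
\[
r(\gamma) := \eta\bigl(\xvec{u}^n + \gamma(\xvec{u}^{n+1} - \xvec{u}^n)\bigr) - \eta(\xvec{u}^n) - \gamma\bigl(\eta^\mathrm{new} - \eta(\xvec{u}^n)\bigr) = 0,
\]
reducing the problem to finding a unique root $\gamma = 1 + \mathcal{O}(\dt^{p-1})$ of this scalar equation and then verifying that the relaxed pair $(t^{n+1}_\gamma, \xvec{u}^{n+1}_\gamma)$ matches the exact trajectory to order $p+1$.

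The two core quantitative ingredients are the defect and the slope at $\gamma = 1$. For the defect, smoothness of $\eta$ combined with $\xvec{u}^{n+1} = \xvec{u}(t^{n+1}) + \mathcal{O}(\dt^{p+1})$ yields $\eta(\xvec{u}^{n+1}) = \eta\bigl(\xvec{u}(t^{n+1})\bigr) + \mathcal{O}(\dt^{p+1})$, so the hypothesis on $\eta^\mathrm{new}$ delivers $r(1) = \eta(\xvec{u}^{n+1}) - \eta^\mathrm{new} = \mathcal{O}(\dt^{p+1})$. For the slope,
\[
r'(1) = \eta'(\xvec{u}^{n+1})(\xvec{u}^{n+1} - \xvec{u}^n) - \bigl(\eta^\mathrm{new} - \eta(\xvec{u}^n)\bigr),
\]
the non-degeneracy hypothesis rewrites the first term as $\bigl(c\dt + \mathcal{O}(\dt^2)\bigr)\,\|\xvec{u}^{n+1} - \xvec{u}^n\|$; combined with the expansion $\|\xvec{u}^{n+1} - \xvec{u}^n\| = \dt\,\|f(\xvec{u}^n)\| + \mathcal{O}(\dt^2)$, this contributes a term of exact order $c\,\|f(\xvec{u}^n)\|\,\dt^2$. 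The main obstacle I anticipate is showing that the second term does not destroy this $\Theta(\dt^2)$ lower bound: its leading $\mathcal{O}(\dt)$ piece must cancel the corresponding part of the first term, since both expressions are $p$-th order accurate approximations of the same continuous entropy increment $\int_{t^n}^{t^{n+1}} (\eta' f)\bigl(\xvec{u}(s)\bigr)\,\mathrm{d}s$. Establishing this cancellation requires careful bookkeeping of the several $\mathcal{O}(\dt^{p+1})$ remainders contributed by the base method, the entropy estimate $\eta^\mathrm{new}$, and the Taylor expansion of $\eta$ about $\xvec{u}^n$.

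With $r(1) = \mathcal{O}(\dt^{p+1})$ and $r'(1) = \Theta(\dt^2)$ in hand, a contraction-mapping argument applied to the map $\gamma \mapsto \gamma - r(\gamma)/r'(1)$ on a ball of radius $\mathcal{O}(\dt^{p-1})$ about $\gamma = 1$ produces a unique root $\gamma = 1 + \mathcal{O}(\dt^{p-1})$; the quadratic remainder $\tfrac{1}{2}r''(\xi)(\gamma-1)^2 = \mathcal{O}(\dt^{2p-2})$ is dominated by $r'(1)(\gamma-1) = \mathcal{O}(\dt^{p+1})$ precisely because $p \ge 2$. For the final order claim, note $t^{n+1}_\gamma - t^{n+1} = (\gamma-1)\dt = \mathcal{O}(\dt^p)$, so Taylor expansion along the exact flow gives $\xvec{u}(t^{n+1}_\gamma) - \xvec{u}(t^{n+1}) = (\gamma-1)\dt\,f\bigl(\xvec{u}(t^{n+1})\bigr) + \mathcal{O}(\dt^{2p})$, whereas $\xvec{u}^{n+1}_\gamma - \xvec{u}^{n+1} = (\gamma-1)(\xvec{u}^{n+1}-\xvec{u}^n) = (\gamma-1)\dt\,f\bigl(\xvec{u}(t^{n+1})\bigr) + \mathcal{O}(\dt^{p+1})$; subtracting and using $\xvec{u}^{n+1} = \xvec{u}(t^{n+1}) + \mathcal{O}(\dt^{p+1})$ yields the claimed bound $\xvec{u}^{n+1}_\gamma = \xvec{u}(t^{n+1}_\gamma) + \mathcal{O}(\dt^{p+1})$.
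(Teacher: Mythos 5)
The paper does not prove Theorem~\ref{thm:relaxation} itself --- it imports the result from the cited relaxation literature --- and your skeleton (reduce \eqref{eq:relaxation} to the scalar equation $r(\gamma)=0$, establish $r(1)=\mathcal{O}(\dt^{p+1})$ and $r'(1)=\Theta(\dt^2)$, solve by a Newton/implicit-function argument near $\gamma=1$, then compare the secant update against the exact flow at the shifted time $t^{n+1}_\gamma$) is exactly the structure of those proofs. The computation of $r(1)$ and the final accuracy paragraph are correct.

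The gap is the step you yourself flag as the main obstacle: the lower bound $r'(1)=\Theta(\dt^2)$ is the heart of the proof, and the justification you give for the $\mathcal{O}(\dt)$ cancellation is wrong. The term $\eta'(\xvec{u}^{n+1})(\xvec{u}^{n+1}-\xvec{u}^n)$ is \emph{not} a $p$-th order accurate approximation of the entropy increment $\int_{t^n}^{t^{n+1}}(\eta'f)(\xvec{u}(s))\,\mathrm{d}s$; it is a one-point endpoint rule that agrees with the increment only to $\mathcal{O}(\dt^2)$. The cancellation does occur, but for the elementary reason that both terms Taylor-expand to $\dt\,(\eta'f)(\xvec{u}^n)+\mathcal{O}(\dt^2)$. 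Worse, the non-degeneracy hypothesis already forces $(\eta'f)(\xvec{u}^n)=0$ (divide by $\|\xvec{u}^{n+1}-\xvec{u}^n\|=\dt\,\|f(\xvec{u}^n)\|+\mathcal{O}(\dt^2)$), so the decisive question sits at order $\dt^2$: the $\dt^2$ coefficient of $\eta^{\mathrm{new}}-\eta(\xvec{u}^n)$, namely $\tfrac12\tfrac{\mathrm{d}}{\mathrm{d}t}(\eta'f)\big|_{t^n}$, could in principle cancel the contribution $c\,\|f(\xvec{u}^n)\|\,\dt^2$ of the first term. Carrying out the expansion shows that the combination collapses to $r'(1)=\tfrac{\dt^2}{2}\,f(\xvec{u}^n)^\top\eta''(\xvec{u}^n)\,f(\xvec{u}^n)+\mathcal{O}(\dt^3)$, i.e.\ what actually keeps $r'(1)$ away from zero is the strict convexity of $\eta$ in the direction $f$; deferring this as ``careful bookkeeping'' leaves the central estimate unproved. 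A second, smaller slip: $r''(\xi)=(\xvec{u}^{n+1}-\xvec{u}^n)^\top\eta''(\cdot)\,(\xvec{u}^{n+1}-\xvec{u}^n)=\mathcal{O}(\dt^2)$, so the quadratic remainder in your contraction argument is $\mathcal{O}(\dt^{2p})$, not $\mathcal{O}(\dt^{2p-2})$. With your stated bound, domination over $r'(1)(\gamma-1)=\mathcal{O}(\dt^{p+1})$ would require $p>3$; only the corrected bound yields the condition $p\ge 2$ that you invoke.
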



The crucial observation for our application in this article is that there is
no further assumption on the preliminary time step update $\xvec{u}^{n+1}$
other than the accuracy constraint
$\xvec{u}^{n+1} = \xvec{u}(t^{n+1}) + \mathcal{O}(\dt^{p+1})$
in Theorem~\ref{thm:relaxation}. In particular,
the results can be applied to implicit Runge-Kutta methods where the stage
equations are solved inexactly with Newton's method (or another iterative method) up to some tolerance as long as the tolerance of the nonlinear solver does not affect the accuracy of the time integration method. A perturbation analysis shows that an approximation $\xvec{u}^{n+1} + \xvec{\varepsilon}$ results in a perturbed relaxation parameter that can be estimated. In general, we need that the size of the perturbation $\xvec{\varepsilon}$ is at most of the same order as the local error of the baseline time integration method, i.e. $\mathcal{O}(\dt^{p+1})$. This is precisely the situation desired for iterative methods in practice.


\section{Korteweg-de Vries equation} \label{sec:kdv}

We have motivated our study by looking at the influence of inexact solutions
obtained by Newton's method applied to provably entropy conservative and
entropy dissipative time discretization methods for Burgers' equation. Next,
we will look at a more complicated equation where
i) implicit methods are required due to stiffness constraints and
ii) entropy conservative methods have a significant advantage.
Indeed, the classical Korteweg-de Vries (KdV) equation
\begin{equation} \label{eq:kdv}
u_t + 6 u u_x + u_{xxx} = 0, \quad u(x,t=0) = u_0.
\end{equation}
is stiff due to the linear dispersive term $u_{xxx}$. We consider solutions of the form
\begin{equation} \label{eq:kdv-soliton}
u(x,t) = \frac{c}{2} \sech^2 \left( \frac{\sqrt{c}}{2} (x - ct) \right).
\end{equation}
In the context of the KdV equation, \eqref{eq:kdv-soliton} describes a soliton propagating with speed $c$. The behavior of numerical time integration methods applied to soliton solutions has been analyzed in \cite{frutos1997accuracy}: If the time integration method conserves the linear invariant $\int u$ and the quadratic invariant $\int u^2$, the error of the numerical solution has a leading-order term that grows linearly with time. Otherwise, the error grows quadratically in time at leading order.

De Frutos and Sanz-Serna \cite{frutos1997accuracy} verified this numerically using the implicit midpoint rule and an entropy non-conservative third order SDIRK method. They used essentially exact solutions of the stage equations. Under such conditions, relaxation has been demonstrated to yield the same reduced error growth when applied to the SDIRK method \cite{ranocha2020relaxationHamiltonian}.

Here, we extend the investigations and focus on the influence of
non-negligible tolerances of the nonlinear solver. In other words, we approximate the stage solutions to the stage equations with a tolerance that matches that of the time discretization.

\subsection{Numerical methods}

We consider the soliton solution \eqref{eq:kdv-soliton} of the KdV equation \eqref{eq:kdv} in the periodic domain $x \in (-10,10]$. The spatial discretization is the same as for Burgers' equation, i.e., five-point centered (i.e. skew-symmetric) differences with a spatial increment $\dx = 0.1$. In time, the implicit midpoint rule is used with time step $\dt = 0.05$. The final time is set to $t=1000$.

The solution to the nonlinear system arising within each time step is approximated with Newton-GMRES. Here, the absolute tolerance is set to zero and the relative tolerance is chosen as $tol \in \{ 10^{-3}, 10^{-4}, 10^{-5} \}$.

Applying relaxation to preserve a quadratic invariant yields a linear
equation for the relaxation parameter that we solve analytically. For the implicit midpoint rule, it is given by
\[
\gamma = \frac{\| \xvec{u}^n \|^2 - \langle \xvec{U}, \xvec{u}^n \rangle}{\| \xvec{U} - \xvec{u}^n \|^2}.
\]

The tolerances for GMRES are chosen using the procedure suggested by Eisenstat and Walker \cite{eisenstat1996choosing}, which is designed to recover quadratic convergence without over-iterating the linear solver in the early Newton iterations. This procedure requires the choice of two scalar parameters, $\eta_{\mathrm{max}}$ and $\gamma$ (not to be confused with the relaxation parameter). Here, we follow the implementation described in \cite[Chapter 6.3]{kelley1995iterative} with the parameter choices $(\gamma,\eta_{\mathrm{max}}) = (0.9,0.9)$.

\subsection{Numerical results}

Fig. \ref{fig:kdv} shows the discrete $L^2$ error and the error in the quadratic invariant $\eta(\xvec{u}) = \frac{1}{2} \|\xvec{u}\|^2$ for four cases: The unrelaxed solver with relative tolerance chosen from the set $\{ 10^{-3}, 10^{-4}, 10^{-5} \}$ and the relaxed solver with relative tolerance $10^{-3}$. The absolute tolerance is set to zero.

\begin{figure}[ht]
\centering
\leftskip -0.85cm
  \includegraphics[width=1.15\textwidth]{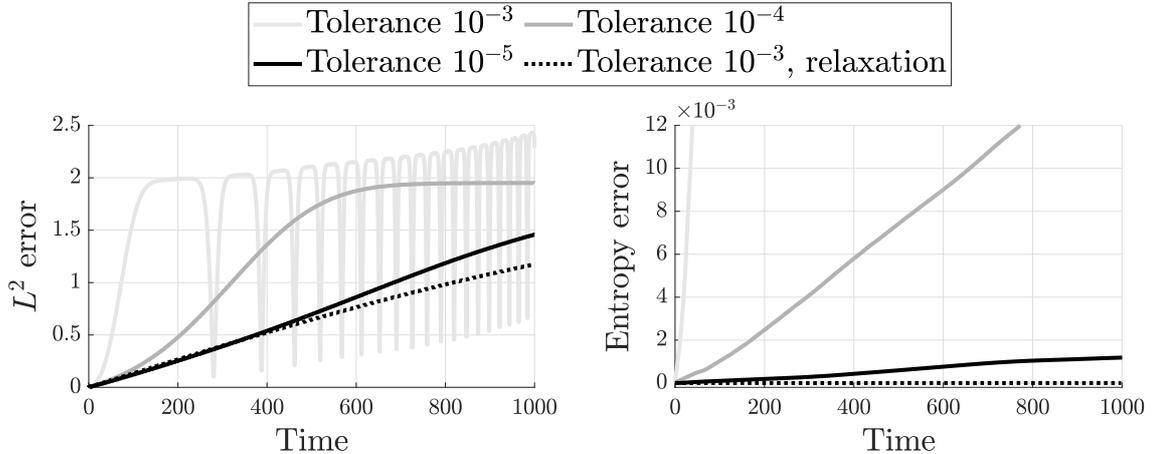}
  \caption{Error of numerical solutions and entropy error of the KdV equation \eqref{eq:kdv} with periodic boundary conditions using the implicit midpoint rule with time step size $\dt = 0.05$. The implicit equations are solved with Newton-GMRES and different relative tolerances.}
  \label{fig:kdv}
\end{figure}

The discrete $L^2$ error after the first time step is roughly $10^{-3}$.
Newton's method with a relative tolerance of $10^{-3}$ results in a superlinear error growth in time. The error eventually plateaus due to the fact that the numerical solution drifts completely out of phase with the true solution. It subsequently drifts in and out of phase, which gives rise to the oscillating nature of the error. Subsequent growth is also a consequence of the soliton losing its initial shape.

Shrinking the tolerance to $10^{-4}$ reduces the error growth rate significantly. However, the growth is still superlinear and reaches the same plateau seen with the larger tolerance. Yet another tolerance reduction to $10^{-5}$ leads to considerably improved results. The error grows linearly throughout the simulation time. However, the quadratic invariant is still not conserved.

By instead applying relaxation after every time step and using the original tolerance $10^{-3}$, the error growth rate is reduced even further. Additionally, the entropy is conserved up to machine precision. Thus, entropy conservation yields a better numerical solution even with a two orders of magnitude larger tolerance compared to the non-conservative scheme.

\begin{figure}[ht]
\centering
\leftskip -0.85cm
  \includegraphics[width=1.15\textwidth]{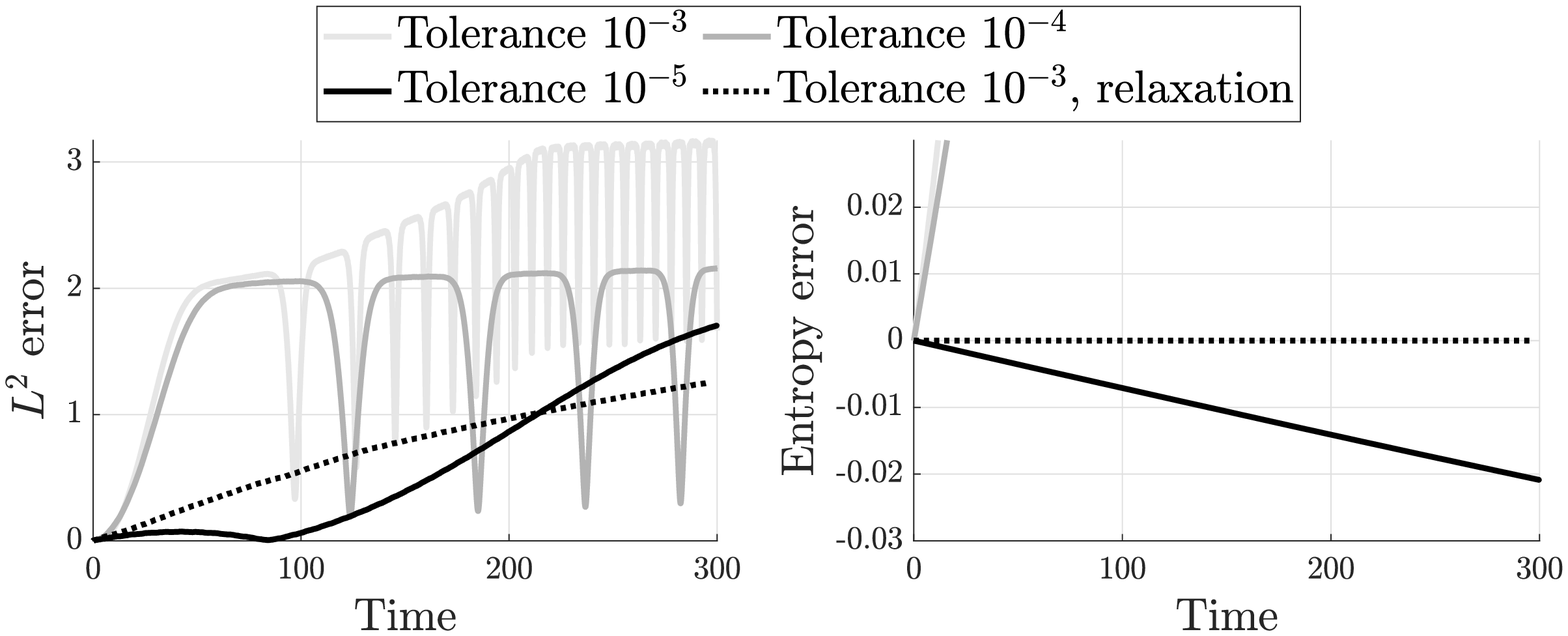}
  \caption{Error of numerical solutions and entropy error of the KdV equation \eqref{eq:kdv} with periodic boundary conditions using the fourth-order, three-stage Lobatto~IIIC method with time step size $\dt = 0.1$. The implicit equations are solved with Newton-GMRES and different absolute and relative tolerances.}
  \label{fig:kdv-lobatto}
\end{figure}

Figure~\ref{fig:kdv-lobatto} shows the results using the same space
discretization but the fourth-order, three-stage Lobatto~IIIC method in
time with $\dt = 0.1$. Since this method is $B$-stable, it dissipates the quadratic entropy when the stage equations are solved exactly. Here, we again apply Newton-GMRES with different tolerances. In this case, we set the absolute and relative tolerances to the same value, again chosen from the set $\{ 10^{-3}, 10^{-4}, 10^{-5} \}$. The $L^2$ error after one time step is approximately $10^{-3}$.

The resulting scheme is anti-dissipative with tolerances $10^{-3}$ and $10^{-4}$. The error displays similar characteristics to that of the midpoint rule with an initially superlinear growth.

With the tolerance $10^{-5}$ the entropy error is negative. The $L^2$ error shows a hump, suggesting that the numerical solution first drifts out of phase in one direction, then turns around and drifts the other way. Eventually the error starts to grow superlinearly as seen for the other tolerances.

As for the midpoint rule, relaxation leads to entropy conservation, a slower error growth and an overall better numerical solution. Again, this holds even when the tolerance is two orders of magnitude larger than the non-conservative scheme.


\section{Benjamin-Bona-Mahony equation} \label{sec:bbm}

An alternative to the stiff dispersive term of the KdV equation has been
proposed by Benjamin, Bona, and Mahony (BBM) in \cite{benjamin1972model},
leading to the BBM equation
\begin{equation} \label{eq:bbm}
  u_t + u_x + u u_x - u_{txx} = 0.
\end{equation}
Due to the mixed-derivative term $-u_{txx}$, the system is not stiff, but a linear elliptic equation must be solved to evaluate the time derivative $u_t$. Assuming periodic boundary conditions, the functionals
\begin{equation} \label{eq:bbm-invariants}
\begin{aligned}
  J^{\text{BBM}}_1(u)
  &= \int u,
  \\
  J^{\text{BBM}}_2(u)
  &= \frac{1}{2} \int ( u^2 + (u_x)^2 )
  = \frac{1}{2} \int u (\operatorname{I} - \partial_x^2) u,
  \\
  J^{\text{BBM}}_3(u)
  &= \int (u + 1)^3,
\end{aligned}
\end{equation}
are invariants of solutions to the BBM equation \cite{olver1979euler}. The error growth under time discretization has been analyzed in \cite{araujo2001error}: For methods conserving the linear invariant and one of the nonlinear invariants, the error of solitary waves grows linearly in time while general methods have a quadratically growing
error (both at leading order). We use this example to investigate the behavior of the methods also for non-quadratic entropies.

\subsection{Numerical methods}

We introduce numerical methods conserving important invariants of the
BBM equation \eqref{eq:bbm} with periodic boundary conditions. To broaden the scope of the following derivations, we assume that a discrete inner product is given by a diagonal, symmetric, and positive-definite matrix
$\xvec{M}$. For the classical finite-difference methods described above,
$\xvec{M} = \dx \xvec{I}$. Next, we need derivative operators $\xvec{D}_{1,2}$ approximating the first and second derivative operators. We require compatibility with integration by parts, i.e., we need that $\xvec{D}_1$ is skew-symmetric with respect to $\xvec{M}$ and that $\xvec{D}_2$ is symmetric and negative semidefinite with respect to $\xvec{M}$. This is satisfied by classical central finite differences and Fourier collocation methods but also by appropriate continuous and discontinuous Galerkin methods; see e.g. \cite{ranocha2021broad}.

For brevity, let $\xvec{u}^2 = \diag{\xvec{u}} \xvec{u}$ denote the elementwise square of $\xvec{u}$. The convention can similarly be extended to other elementwise powers as necessary.

Using such periodic derivative operators, the semidiscretization
\begin{equation} \label{eq:bbm-SBP-split}
  \xvec{u}_t
  + (\xvec{I} - \xvec{D}_2)^{-1} \left(
    \frac{1}{3} \xvec{D}_1 \xvec{u}^2
    + \frac{1}{3} \diag{\xvec{u}} \xvec{D}_1 \xvec{u}
    + \xvec{D}_1 \xvec{u}
  \right)
  =
  \xvec{0}
\end{equation}
conserves both the linear and the quadratic invariant \cite{ranocha2021broad}, which are discretely represented as
\begin{equation*}
  J^{\text{BBM}}_1(\xvec{u})
  =
  \xvec{1}^\top \xvec{M} \xvec{u},
  \qquad
  J^{\text{BBM}}_2(\xvec{u})
  =
  \xvec{u}^\top (\xvec{I} - \xvec{D}_2) \xvec{u}.
\end{equation*}
All symplectic Runge-Kutta methods, such as the implicit midpoint rule, conserve these linear and quadratic invariants
\cite{hairer2006geometric}.

Next, we construct a method conserving the cubic invariant.

\begin{theorem} \label{thm:bbm-SBP-central}
  The semidiscretization
  \begin{equation} \label{eq:bbm-SBP-central}
    \xvec{u}_t
    + (\xvec{I} - \xvec{D}_2)^{-1} \xvec{D}_1 \left(
      \frac{1}{2} \xvec{u}^2
      + \xvec{u}
    \right)
    =
    \xvec{0}
  \end{equation}
  conserves the linear invariant $J^{\text{BBM}}_1(\xvec{u})$ and the cubic invariant
  \begin{equation*}
    J^{\text{BBM}}_3(\xvec{u})
    =
    \xvec{1}^\top \xvec{M} (\xvec{1} + \xvec{u})^3
  \end{equation*}
  for commuting periodic derivative operators $\xvec{D}_1$, $\xvec{D}_2$.
\end{theorem}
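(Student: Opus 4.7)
The plan is to differentiate each invariant along the semidiscrete flow \eqref{eq:bbm-SBP-central} and show the result vanishes. Throughout I use the standing properties that $\xvec{D}_1$ is skew-symmetric and $\xvec{D}_2$ symmetric with respect to $\xvec{M}$, that $\xvec{M}$ is diagonal (so it commutes with any $\diag{\cdot}$), and that $\xvec{D}_1 \xvec{1} = \xvec{D}_2 \xvec{1} = \xvec{0}$ since both operators approximate derivatives on a periodic grid. Combining symmetry of $\xvec{D}_2$ with respect to $\xvec{M}$ with the identity $(\xvec{I} - \xvec{D}_2)^{-1} = \xvec{M}^{-1}(\xvec{I} - \xvec{D}_2^\top)^{-1}\xvec{M}$ shows that $(\xvec{I} - \xvec{D}_2)^{-1}$ is itself symmetric with respect to $\xvec{M}$, a fact I will use repeatedly.

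For the linear invariant, I compute $\frac{\mathrm{d}}{\mathrm{d}t} J^{\text{BBM}}_1(\xvec{u}) = \xvec{1}^\top \xvec{M} \xvec{u}_t$ and substitute \eqref{eq:bbm-SBP-central}. Using commutativity of $\xvec{D}_1$ and $\xvec{D}_2$, I pull $\xvec{D}_1$ to the left past $(\xvec{I} - \xvec{D}_2)^{-1}$, then transpose to get $(\xvec{D}_1^\top \xvec{M} \xvec{1})^\top (\xvec{I} - \xvec{D}_2)^{-1}(\tfrac{1}{2}\xvec{u}^2 + \xvec{u})$. Skew-symmetry gives $\xvec{D}_1^\top \xvec{M} \xvec{1} = -\xvec{M} \xvec{D}_1 \xvec{1} = \xvec{0}$, so conservation follows.

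For the cubic invariant, the elementwise chain rule (and diagonality of $\xvec{M}$) yields
\begin{equation*}
  \tfrac{\mathrm{d}}{\mathrm{d}t} J^{\text{BBM}}_3(\xvec{u})
  = 3\,\bigl((\xvec{1}+\xvec{u})^2\bigr)^\top \xvec{M}\, \xvec{u}_t.
\end{equation*}
The key algebraic observation, which is the discrete analogue of the continuous product/chain-rule identity that makes this flux splitting entropy conservative, is
\begin{equation*}
  (\xvec{1}+\xvec{u})^2 = \xvec{1} + 2\xvec{v}, \qquad \xvec{v} := \tfrac{1}{2}\xvec{u}^2 + \xvec{u}.
\end{equation*}
Substituting $\xvec{u}_t = -(\xvec{I}-\xvec{D}_2)^{-1}\xvec{D}_1 \xvec{v}$ splits the time derivative into
\begin{equation*}
  -3\xvec{1}^\top \xvec{M} (\xvec{I}-\xvec{D}_2)^{-1}\xvec{D}_1 \xvec{v}
  \;-\; 6\,\xvec{v}^\top \xvec{M} (\xvec{I}-\xvec{D}_2)^{-1}\xvec{D}_1 \xvec{v}.
\end{equation*}
The first summand vanishes by the same calculation used for $J^{\text{BBM}}_1$. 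For the second, I will show that the operator $\xvec{A} := \xvec{M}(\xvec{I}-\xvec{D}_2)^{-1}\xvec{D}_1$ is skew-symmetric as an ordinary matrix. Using commutativity to write $\xvec{A} = \xvec{M}\xvec{D}_1(\xvec{I}-\xvec{D}_2)^{-1}$ and transposing,
\begin{equation*}
  \xvec{A}^\top = (\xvec{I}-\xvec{D}_2)^{-\top}\xvec{D}_1^\top \xvec{M}
  = -(\xvec{I}-\xvec{D}_2)^{-\top}\xvec{M}\xvec{D}_1
  = -\xvec{M}(\xvec{I}-\xvec{D}_2)^{-1}\xvec{D}_1 = -\xvec{A},
\end{equation*}
where the second equality uses $\xvec{M}$-skew-symmetry of $\xvec{D}_1$ and the third uses $\xvec{M}$-symmetry of $(\xvec{I}-\xvec{D}_2)^{-1}$. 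Therefore $\xvec{v}^\top \xvec{A} \xvec{v} = 0$, completing the conservation of $J^{\text{BBM}}_3$.

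The main obstacle I anticipate is the second summand: one might worry that the inverse operator $(\xvec{I}-\xvec{D}_2)^{-1}$ spoils the usual skew-symmetry cancellation. The resolution hinges on two facts working in tandem — the hypothesis that $\xvec{D}_1$ and $\xvec{D}_2$ commute, and that $\xvec{M}$-symmetry transfers from $\xvec{D}_2$ to $(\xvec{I}-\xvec{D}_2)^{-1}$ — without both, the overall operator $\xvec{A}$ would fail to be skew-symmetric and the quadratic form would not vanish.
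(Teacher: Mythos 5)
Your proof is correct and follows essentially the same route as the paper: your identity $(\xvec{1}+\xvec{u})^2 = \xvec{1} + 2(\tfrac{1}{2}\xvec{u}^2+\xvec{u})$ is exactly the paper's reduction of $J^{\text{BBM}}_3$ to the linear invariant plus the Hamiltonian $\mathcal{H}(\xvec{u}) = \xvec{1}^\top\xvec{M}(\tfrac{1}{6}\xvec{u}^3+\tfrac{1}{2}\xvec{u}^2)$, and the skew-symmetry of $\xvec{M}(\xvec{I}-\xvec{D}_2)^{-1}\xvec{D}_1$ that you verify inline is precisely the cited Lemma~2.3 of \cite{ranocha2021broad}. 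The only difference is that you prove the auxiliary facts (including $\xvec{D}_{1}\xvec{1}=\xvec{D}_2\xvec{1}=\xvec{0}$ and the transfer of $\xvec{M}$-symmetry to the inverse) explicitly rather than citing them, which makes the role of the commutativity hypothesis clearer.
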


\begin{proof}
  Conservation of the linear invariant follows from Lemma~2.1 and Lemma~2.2 of \cite{ranocha2021broad} since
  \begin{equation*}
    \xvec{1}^\top \xvec{M} \xvec{u}_t
    =
    - \xvec{1}^\top \xvec{M} (\xvec{I} - \xvec{D}_2)^{-1} \xvec{D}_1 \left(
      \frac{1}{2} \xvec{u}^2
      + \xvec{u}
    \right)
    =
    - \xvec{1}^\top \xvec{D}_1 \left(
      \frac{1}{2} \xvec{u}^2
      + \xvec{u}
    \right)
    =
    0.
  \end{equation*}
  Given conservation of the linear invariant, conservation of the cubic
  invariant is equivalent to conservation of the Hamiltonian
  \begin{equation*}
    \mathcal{H}(\xvec{u})
    =
    \xvec{1}^\top \xvec{M} \left(
      \frac{1}{6} \xvec{u}^3
      + \frac{1}{2} \xvec{u}^2
    \right).
  \end{equation*}
  This Hamiltonian is conserved, since $\xvec{M} (\xvec{I} - \xvec{D}_2)^{-1} \xvec{D}_1$ is skew-symmetric \cite[Lemma~2.3]{ranocha2021broad} and
  \begin{equation*}
    \mathcal{H}_t(\xvec{u})
    =
    \mathcal{H}'(\xvec{u}) \xvec{u}_t
    =
    -\left(
      \frac{1}{6} \xvec{u}^3
      + \frac{1}{2} \xvec{u}^2
    \right)^\top
    \xvec{M} (\xvec{I} - \xvec{D}_2)^{-1} \xvec{D}_1
    \left(
      \frac{1}{6} \xvec{u}^3
      + \frac{1}{2} \xvec{u}^2
    \right)
    =
    0.
  \end{equation*}
\end{proof}

The average vector field (AVF) method \cite{mclachlan1999geometric} for an ODE
$\xvec{u}'(t) = f\bigl( \xvec{u}(t) \bigr)$ is given by
\begin{equation*}
  \xvec{u}^{n+1} = \xvec{u}^n + \dt \int_0^1 f\bigl( s \xvec{u}^{n+1} + (1 - s) \xvec{u}^n \bigr) \mathrm{d} s.
\end{equation*}
It conserves the Hamiltonian $\mathcal{H}$ of Hamiltonian systems
$\xvec{u}'(t) = S \, \mathcal{H}'\bigl( \xvec{u}(t) \bigr)$ with a constant (in time) skew-symmetric operator $S$ \cite{quispel2008new}. For quadratic Hamiltonians, it is equivalent to the implicit midpoint rule; for (up to) quartic Hamiltonians, it is equivalent to
\begin{equation} \label{eq:avf-simpsons-rule}
  \xvec{u}^{n+1} = \xvec{u}^n + \frac{\dt}{6} \left(
    f(\xvec{u}^{n})
    + 4 f\biggl(\frac{\xvec{u}^{n+1} + \xvec{u}^{n}}{2}\biggr)
    + f(\xvec{u}^{n+1})
  \right)
\end{equation}
see \cite{celledoni2009energy}. Thus, we obtain

\begin{proposition}
  The time integration method \eqref{eq:avf-simpsons-rule} applied to the
  semidiscretization \eqref{eq:bbm-SBP-central} of the BBM equation with
  periodic boundary conditions conserves the linear and cubic invariants
  under the conditions given in Theorem~\ref{thm:bbm-SBP-central}.
\end{proposition}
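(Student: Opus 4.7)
The plan is to combine three ingredients, two of which have essentially already been prepared. First, the proof of Theorem~\ref{thm:bbm-SBP-central} recasts the semidiscretization \eqref{eq:bbm-SBP-central} as a Hamiltonian system with Hamiltonian $\mathcal{H}(\xvec{u}) = \xvec{1}^\top \xvec{M}\bigl(\tfrac{1}{6}\xvec{u}^3 + \tfrac{1}{2}\xvec{u}^2\bigr)$. Second, the AVF method preserves $\mathcal{H}$ for systems $\xvec{u}'(t) = S \mathcal{H}'(\xvec{u})$ with constant skew-symmetric $S$ by \cite{quispel2008new}. Third, since $\mathcal{H}$ is cubic (hence of polynomial degree at most four) in $\xvec{u}$, the AVF method coincides with the Simpson-type rule \eqref{eq:avf-simpsons-rule} by \cite{celledoni2009energy}.

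First I would make the Hamiltonian structure fully explicit by writing $\xvec{u}_t = S \nabla \mathcal{H}(\xvec{u})$ with $\nabla \mathcal{H}(\xvec{u}) = \xvec{M}\bigl(\tfrac{1}{2}\xvec{u}^2 + \xvec{u}\bigr)$ and $S := -(\xvec{I}-\xvec{D}_2)^{-1} \xvec{D}_1 \xvec{M}^{-1}$, which is constant in time. A short computation using the (anti)symmetry of $\xvec{D}_1$ and $\xvec{D}_2$ with respect to $\xvec{M}$, together with the assumed commutativity of $\xvec{D}_1$ and $\xvec{D}_2$ (which transfers to commutativity of $\xvec{D}_1$ with $(\xvec{I}-\xvec{D}_2)^{-1}$), verifies $S^\top = -S$. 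Applying the AVF conservation result from \cite{quispel2008new} together with the degree-four equivalence from \cite{celledoni2009energy} then yields $\mathcal{H}(\xvec{u}^{n+1}) = \mathcal{H}(\xvec{u}^n)$.

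Next, conservation of the linear invariant $J^{\text{BBM}}_1(\xvec{u}) = \xvec{1}^\top \xvec{M} \xvec{u}$ follows directly from \eqref{eq:avf-simpsons-rule}: multiply by $\xvec{1}^\top \xvec{M}$ and use that $\xvec{1}^\top \xvec{M} f(\xvec{v}) = 0$ for every $\xvec{v}$, where $f$ denotes the right-hand side of \eqref{eq:bbm-SBP-central}. This is the identical computation used for the linear invariant in the proof of Theorem~\ref{thm:bbm-SBP-central}, relying on Lemmas~2.1--2.2 of \cite{ranocha2021broad} to collapse $\xvec{1}^\top \xvec{M}(\xvec{I}-\xvec{D}_2)^{-1} \xvec{D}_1$ to $\xvec{1}^\top \xvec{D}_1 = \xvec{0}^\top$. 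Each of the three function evaluations in \eqref{eq:avf-simpsons-rule} therefore contributes nothing to $\xvec{1}^\top \xvec{M}(\xvec{u}^{n+1} - \xvec{u}^n)$.

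Finally, expanding $(\xvec{1} + \xvec{u})^3$ elementwise yields the algebraic identity $J^{\text{BBM}}_3(\xvec{u}) = 6 \mathcal{H}(\xvec{u}) + 3 J^{\text{BBM}}_1(\xvec{u}) + \xvec{1}^\top \xvec{M} \xvec{1}$, so conservation of $J^{\text{BBM}}_3$ is immediate from conservation of $\mathcal{H}$ and $J^{\text{BBM}}_1$ and the constancy of the last term. The only mildly technical step is verifying the skew-symmetry of $S$; everything else is a direct appeal to the already-cited AVF results together with the arithmetic developed for Theorem~\ref{thm:bbm-SBP-central}.
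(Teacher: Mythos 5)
Your proposal is correct and follows essentially the same route as the paper, which states the proposition as an immediate consequence of the Hamiltonian structure established in the proof of Theorem~\ref{thm:bbm-SBP-central}, the AVF conservation property from \cite{quispel2008new}, and the equivalence of AVF with \eqref{eq:avf-simpsons-rule} for up-to-quartic Hamiltonians from \cite{celledoni2009energy}. You merely make explicit the details the paper leaves implicit (the skew operator $S$, its skew-symmetry via commutativity, the linear-invariant computation, and the affine relation between $J^{\text{BBM}}_3$, $\mathcal{H}$, and $J^{\text{BBM}}_1$), all of which check out.
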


\subsection{Numerical results}

We consider the traveling wave solution
\begin{equation}
\label{eq:bbm-traveling-wave}
  u(t,x) = A \sech\bigl( K (x - c t) \bigr)^2,
  \quad A = 3 (c - 1),
  \quad K = \frac{1}{2} \sqrt{1 - 1 / c},
\end{equation}
with speed $c = 1.2$ in the periodic domain $(-90, 90]$. We apply the
semidiscretizations described above with Fourier collocation methods \cite[Chapter 4]{kopriva2009implementing}
using $2^6$ nodes. The time integration methods use fixed time steps
$\dt = 0.25$. The nonlinear systems are solved with the same Newton-GMRES method as for the KdV equation. The relative tolerances are, as before, chosen as $tol \in \{ 10^{-3}, 10^{-4} \}$.

Applying relaxation to preserve a quadratic invariant yields a linear
equation for the relaxation parameter that we solve analytically. For
a cubic invariant, the relaxation parameter is determined by a quadratic
equation that we solve analytically; we always choose the solution closer
to unity.

\begin{figure}
\centering
	\begin{subfigure}{\textwidth}
    \includegraphics[width=\textwidth]{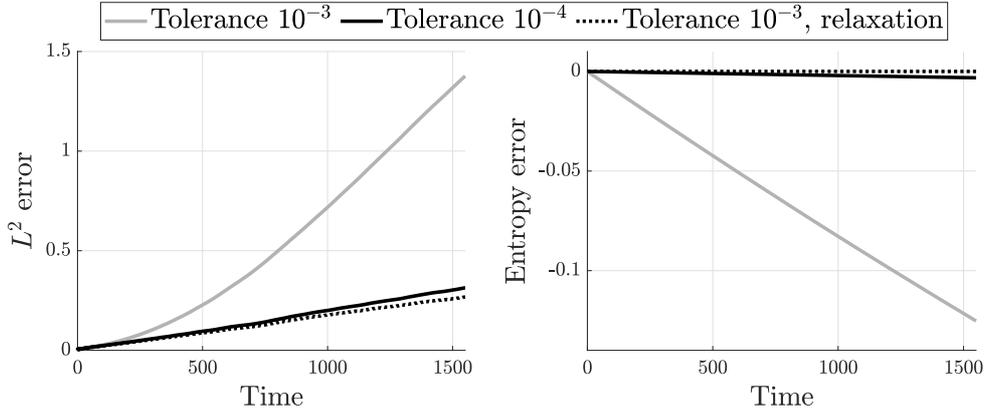}
    \caption{Discrete $L^2$ and entropy error for the quadratic invariant $J_2^{BBM}(\xvec{u})$. The implicit midpoint rule is applied to the semidiscretization \eqref{eq:bbm-SBP-split}.}
  	\end{subfigure}%

  	\begin{subfigure}{\textwidth}
  	\includegraphics[width=\textwidth]{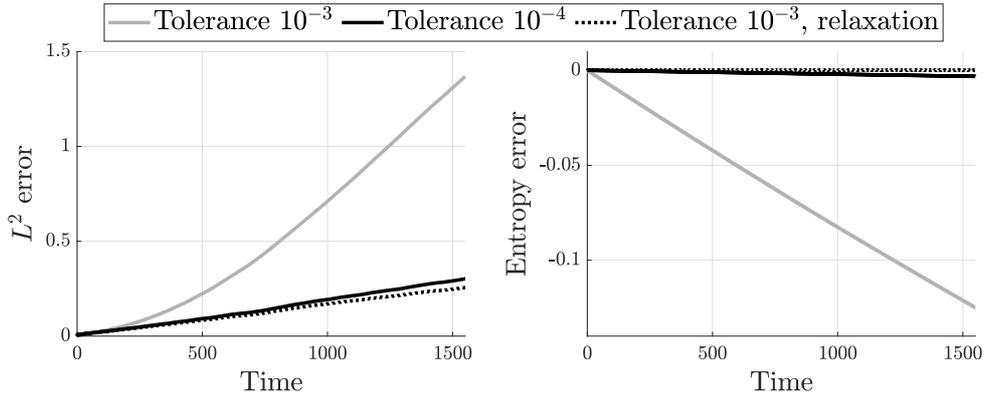}
    \caption{Discrete $L^2$ and entropy error for the cubic invariant $J_3^{BBM}(\xvec{u})$. The AVF method is applied to the semidiscretization \eqref{eq:bbm-SBP-central}.}
  	\end{subfigure}%
  	\caption{Discrete $L^2$ error of numerical methods for the BBM equation
           \eqref{eq:bbm} with periodic boundary conditions. The implicit
           equations are solved with Newton-GMRES and different relative
           tolerances.}
  \label{fig:bbm_error}
\end{figure}

The results of these numerical experiments are shown in
Fig.~\ref{fig:bbm_error}. The discrete $L^2$ error after the first
time step is of the order $10^{-3}$. Using a relative tolerance of
$10^{-3}$ for Newton-GMRES results in a discrete $L^2$ error that
grows superlinearly in time. Reducing the relative tolerance of Newton's
method to $10^{-4}$ reduces the error growth rate to linear, resulting
in significantly better results for long-time simulations. Applying
relaxation after each time step with a relative tolerance of $10^{-3}$
for Newton's method also yields a linear error growth rate and even
slightly smaller discrete $L^2$ errors for long-time simulations.


\section{Conclusion} \label{sec:conclusion}

We have analyzed entropy properties of iterative solvers
in the context of time integration methods for nonlinear conservation laws. Continuing recent work on linear invariants in
\cite{birken2022conservation,linders2022locally}, we have focused on
the conservation of nonlinear functionals. In particular, we have considered combinations of space and implicit time discretization that result in entropy conservative and entropy dissipative schemes when the arising equation systems were solved exactly. In practice, the iterative solver is terminated once a tolerance is reached. This tolerance is chosen such that the iteration error is smaller than the time integration error. We have demonstrated that, in this situation, Newton's method can result in a qualitatively wrong behavior of the entropy, both
for entropy conservative and dissipative schemes.

Based on an analysis for Burgers' equation, we have explored several possible entropy fixes for Newton's method. Of these, an idea stemming from the recently developed relaxation methods, is most performant. These methods are designed as small modifications of time integrations schemes that are able to preserve the correct evolution of nonlinear functionals. Here we have shown that as long as the iteration error is small enough in the sense described above, they can also be used within implicit time integrators with inexact solves. We have demonstrated that Newton's method with inexact linear solves and reasonable tolerances combines well with the relaxation approach, in particular for nonlinear dispersive wave equations. The numerical results show that, for the problems considered here, entropy conservation leads to smaller errors than non-conservative methods, even when the tolerance of the iterative method is an order of magnitude larger.


\section*{Acknowledgments}

We thank Gregor Gassner for stimulating discussions about this
research topic and comments on an early draft of the manuscript.


\section*{Statements and Declarations}

\subsection*{Funding}

Viktor Linders was partially funded by The Royal Physiographic Society in Lund. \\

\noindent Hendrik Ranocha was supported by the Deutsche Forschungsgemeinschaft
(DFG, German Research Foundation, project number 513301895)
and the Daimler und Benz Stiftung (Daimler and Benz foundation,
project number 32-10/22).

\subsection*{Code availability }

We have set up a reproducibility repository \cite{linders2023resolvingRepro}
for this article, containing all Julia source code required to fully
reproduce the numerical experiments discussed in this article.


\appendix

\section{Entropy analysis for Lobatto IIIC} \label{app:Lobatto}
The purpose of this appendix is to provide details of the entropic behavior of the Lobatto IIIC method used in the numerical experiments in Section \ref{sec:burgers}. The analysis will be kept general enough to encompass methods of arbitrary order of accuracy. It utilizes the Summation-By-Parts (SBP) property \cite{nordstrom2013summation,boom2015high} satisfied by the RK method. The same argument can be made for any RK method associated with the SBP framework such as Radau IA and IIA \cite{ranocha2019some}. See \cite{linders2020properties,versbach2022theoretical} for details about the properties and implementation of such methods.

\subsection{The fully discrete scheme}

For a system of ordinary differential equations $\xvec{u}_t = \xvec{f}(\xvec{u})$, the Runge-Kutta stage equations and update are given by
\begin{equation} \label{eq:RK_stage_and_update}
\begin{aligned}
  \xvec{y}^i
  &=
  \xvec{u}^{n} + \dt_n \sum_{j=1}^{s} a_{ij} \, \xvec{f}\bigl(\xvec{y}^j\bigr),
  \qquad i \in \{1, \dots, s\},
  \\
  \xvec{u}^{n+1}
  &=
  \xvec{u}^{n} + \dt_n \sum_{i=1}^{s} b_{i} \, \xvec{f}\bigl(\xvec{y}^i\bigr).
\end{aligned}
\end{equation}
In the experiment in Section \ref{sec:burgers}, the 3-stage Lobatto IIIC method is used, which is associated with the Butcher tableau
\[
\renewcommand\arraystretch{1.2}
\begin{array}
{c|ccc}
0 & 1/6 & -1/3 & \phantom{-}1/6 \\
1/2 & 1/6 & \phantom{-}5/12 & -1/12 \\
1 & 1/6 & \phantom{-}2/3 & \phantom{-}1/6 \\
\hline
& 1/6 & \phantom{-}2/3 & \phantom{-}1/6
\end{array}.
\]
It will be convenient for our purposes to express \eqref{eq:RK_stage_and_update} in a vector format as
\begin{equation} \label{eq:RK_stages}
\begin{aligned}
\xvec{F}(\xvec{U}) :&= \xvec{U} - \vec{1} \kron \xvec{u}^n + \dt_n (\vec{A} \kron \xvec{I}) \xvec{f}(\xvec{U}) = \xvec{0}, \\
\xvec{u}^{n+1} &= \xvec{y}^s.
\end{aligned}
\end{equation}
Here, $\vec{A} = \{ a_{ij} \}_{i,j=1}^s$ is the Butcher coefficient matrix, $\vec{1}$ is the $s$-element vector of all ones, $\xvec{U}$ is the (column) vector containing the stacked stage vectors $\xvec{y}^1, \dots, \xvec{y}^s$ and $\kron$ denotes the Kronecker product.

The update $\xvec{u}^{n+1}$ can be computed as in \eqref{eq:RK_stages} due to the fact that the final row of $\vec{A}$ is identical to the vector $\vec{b}^\top$. This property can be generalized by considering methods for which there is a vector $\xvec{v} \in \mathbb{R}^s$ such that $\vec{A}^\top \vec{v} = \vec{b}$, in which case $\xvec{u}^{n+1} = (\vec{v}^\top \kron \xvec{I}) \xvec{U}$. For the 3-stage Lobatto IIIC method, we have $\vec{v} = (0,0,1)^\top$.

We now revisit the spatial discretization of Burgers' equation in \eqref{eq:Burgers_semidiscrete} by setting $\xvec{f}(\xvec{u}) = -2(\xvec{D} \diag{\xvec{u}} \xvec{u} + \diag{\xvec{u}} \xvec{D} \xvec{u})$. The entropy behavior of the fully discrete scheme can be analyzed with the aid of the SBP property, which can be expressed as
\begin{equation} \label{eq:SBP_property}
\vec{B} \vec{A}^{-1} + \vec{A}^{-\top} \vec{B} = \diag{1,0,\dots,0,1} = \vec{e}_1 \vec{e}_1^\top + \vec{e}_s \vec{e}_s^\top.
\end{equation}
Here, $\vec{B} = \diag{\vec{b}}$ and $\vec{e}_j$ denotes the $j$th column of the $s \times s$ identity matrix. In this context, $\vec{A}^{-1}$ can be viewed as a difference operator adjoined with an initial condition, and $\vec{B}$ as a quadrature rule \cite{linders2018order}. The SBP property \eqref{eq:SBP_property} is thus a discrete version of integration by parts.

We begin by left-multiplying the stage equations in \eqref{eq:RK_stages} by $\dx \xvec{U}^\top (\vec{B} \vec{A}^{-1} \kron \xvec{I})$. This is a well-defined operation since $\vec{A}$ is invertible for any $s$ \cite{linders2022eigenvalue}. By a derivation identical to \eqref{eq:entropy_midpoint_derivation}, it holds that $(\xvec{y}^i)^\top \xvec{f}(\xvec{y}^i) = \xvec{0}$ for each $i=1,\dots,s$. Since $\vec{B}$ is diagonal it follows that $\dx \xvec{U}^\top (\vec{B} \kron \xvec{I}) \xvec{f}(\xvec{U}) = \xvec{0}$, and consequently
\begin{equation} \label{eq:RK_stage_entropy}
\dx \xvec{U}^\top (\vec{B} \vec{A}^{-1} \kron \xvec{I}) \xvec{U} = \dx \xvec{U}^\top (\vec{B} \vec{A}^{-1} \vec{1} \kron \xvec{u}^n).
\end{equation}
The left-hand side of \eqref{eq:RK_stage_entropy} is a quadratic form and is therefore equal to its symmetric part. The right-hand side is simplified by the relation $\vec{B} \vec{A}^{-1} \vec{1} = \vec{e}_1$; see \cite[Lemma 3]{linders2020properties}. The identity \eqref{eq:RK_stage_entropy} therefore reduces to
\[
\dx \xvec{U}^\top \left( \frac{\vec{B} \vec{A}^{-1} + \vec{A}^{-\top} \vec{B}}{2} \kron \xvec{I} \right) \xvec{U} = \dx \xvec{U}^\top (\vec{e}_1 \kron \xvec{u}^n).
\]
Simplification using the SBP property \eqref{eq:SBP_property} allows us to express this in terms of the first and last stages as
\[
\frac{1}{2} \| \xvec{y}^1 \|^2 + \frac{1}{2} \| \xvec{y}^s \|^2 = \dx (\xvec{y}^1)^\top \xvec{u}^n.
\]
By adding and subtracting $\frac{1}{2} \| \xvec{u}^n \|^2$ from the right-hand side, the entropy, given by $\eta(\xvec{u}^{n+1}) = \frac{1}{2} \| \xvec{y}^s \|^2$, can be expressed as
\[
\begin{aligned}
\eta(\xvec{u}^{n+1}) &= \frac{1}{2} \| \xvec{u}^n \|^2 - \frac{1}{2} \| \xvec{y}^1 \|^2 + \dx (\xvec{y}^1)^\top \xvec{u}^n - \frac{1}{2} \| \xvec{u}^n \|^2 \\
&= \eta(\xvec{u}^n) - \eta(\xvec{y}^1 - \xvec{u}^n).
\end{aligned}
\]
Lobatto IIIC therefore dissipates entropy. This result is independent of the number of stages and holds more generally for RK methods with the SBP property. A slight generalization (to the so called gSBP property \cite{fernandez2014generalized}) is necessary for the analysis to hold for some RK methods such as the Radau family.

\subsection{Newton's method}

The Jacobian $\xvec{F}'$ is explicitly given by
\begin{equation} \label{eq:Lobatto_Jacobian}
\xvec{F}'({\xit{U}{k}}) = \xvec{I} + \dt_n (\vec{A} \kron \xvec{I}) \xvec{f}'(\xit{U}{k}),
\end{equation}
where $\xvec{f}'$ is the Jacobian of the spatial discretization. The form of $\xvec{f}'$ is identical to that seen for the midpoint rule, except that it is now repeated for each stage $\xvec{y}^i$ in a block-diagonal fashion. This leads to an equation for $\xit{U}{k+1}$ of the form
\begin{equation} \label{eq:Lobatto_Newton}
\xit{U}{k+1} - \vec{1} \kron \xvec{u}^n + \dt_n (\vec{A} \kron \xvec{I}) \xvec{f}(\xit{U}{k}) + 2 \dt_n (\vec{A} \kron \xvec{I}) \tilde{\xvec{M}} \Delta \xvec{U} = \xvec{0}.
\end{equation}
The matrix $\tilde{\xvec{M}}$ is block-diagonal with each block identical to the case for the midpoint rule, but evaluated at the individual stages. As before, \eqref{eq:Lobatto_Newton} represents a linearization of the fully discrete scheme around the iterate $\xit{U}{k}$, perturbed by a term arising from the spatial Jacobian.

By an analysis completely analogous to that of in the previous subsection, the entropy relation evaluates to
\[
\eta(\xvec{u}^{n+1}) = \eta(\xvec{u}^n) - \eta(\xvec{y}^1 - \xvec{u}^n) - 2 \dt_n \dx \Delta \xvec{U}^\top \xvec{M} \Delta \xvec{U},
\]
where $\xvec{M} = (\vec{B} \kron \xvec{D}) \diag{\xit{U}{k}} + \diag{(\vec{B} \kron \xvec{D}) \xit{U}{k}}$. The entropy error induced by Newton's method is thus of the same form as for the midpoint rule (for which $\vec{B}=1$), but includes all $s$ stages of the Runge-Kutta scheme.


\bibliographystyle{hsiam}
\bibliography{references_arxiv.bib}


\end{document}